\newtheorem{theorem}{Theorem}
\newtheorem{proposition}[theorem]{Proposition}
\newtheorem{fact}[theorem]{Fact}
\newtheorem{remark}[theorem]{Remark}
\newtheorem{definition}[theorem]{Definition}
\newtheorem{observation}[theorem]{Observation}
\newcommand{\ignore}[1]{}
\newcommand{\tr}{\text{tr}}
\newcounter{notes}
\begin{document}

\numberwithin{theorem}{section}
\numberwithin{equation}{section}

\title[Lengthening deformations of singular hyperbolic tori]{Lengthening deformations \\ of singular hyperbolic tori}
\author[F.\ Gu\'eritaud]{Fran\c{c}ois Gu\'eritaud}
\address{CNRS and Universit\'e Lille 1, Laboratoire Paul Painlev\'e, 59655 Villeneuve d'Ascq Cedex, France 
\newline Wolfgang-Pauli Institute, University of Vienna, CNRS-UMI 2842, Austria}\email{francois.gueritaud@math.univ-lille1.fr}

\begin{abstract} Let $S$ be a torus with a hyperbolic metric admitting one puncture or cone singularity. We describe which infinitesimal deformations of $S$ lengthen (or shrink) all closed geodesics. We also study how the answer degenerates when $S$ becomes Euclidean, i.e.\ very small. \end{abstract}

\maketitle
\begin{center}
\begin{minipage}{0.8\textwidth}
{\footnotesize \begin{spacing}{1.0} \noindent {\sc R\'esum\'e.} Soit $S$ un tore muni d'une m\'etrique hyperbolique admettant un trou ou une singularit\'e conique. Nous d\'ecrivons quelles d\'eformations infinit\'esimales de $S$ allongent (ou raccourcissent) toutes les g\'eod\'esiques ferm\'ees. Nous \'etudions aussi comment la r\'eponse \`a cette question d\'eg\'en\`ere lorsque $S$ devient euclidienne, c'est-\`a-dire tr\`es petite.\end{spacing}}
\end{minipage}
\end{center}

\section{Introduction}

\subsection{Context} \label{context}
Given a hyperbolic surface $(S,g)$ with geodesic boundary, one can ask which infinitesimal deformations of $g$ in Teichm\"uller space have the property of \emph{lengthening all closed geodesics} of $g$. A simple surgery argument (see Lemma 3.4 of \cite{stretchmaps}) shows that if a self-intersecting closed curve is shortened, then so is another closed curve with fewer self-intersections; so the above question can be restricted to \emph{simple closed curves} only. A slightly stronger requirement is for the deformation to lengthen all \emph{measured laminations}. This choice is more natural since measured laminations, when seen up to scalar multiplication, form a \emph{compact topological manifold} in which simple closed curves lie densely (in fact, the surgery argument extends, from closed vs.\ simple closed geodesics, to currents vs.\ measured laminations). An infinitesimal deformation $v\in T_g \mathcal{T}_S$, where $\mathcal{T}_S$ is the Teichm\"uller space of $S$, will be called \emph{lengthening} if it 
lengthens all measured laminations to first order. Drumm proved \cite{drumm} that lengthening deformations exist whenever $\partial S \neq \emptyset$.

Associated to $v\in T_g \mathcal{T}_S$ is an affine representation $$\rho_v:\pi_1(S) \rightarrow \text{SO}_{2,1}(\mathbb{R}) \ltimes_{\text{Ad}} \mathfrak{so}_{2,1}(\mathbb{R})\simeq \text{Isom}^+(\mathfrak{so}_{2,1}(\mathbb{R}))$$ whose projection to $\text{SO}_{2,1}(\mathbb{R})$ coincides with (the holonomy representation of) $g$. It was Margulis \cite{margulis} who first noticed that 
$\rho_v$ can define a \emph{properly discontinuous} action on $\mathfrak{so}_{2,1}(\mathbb{R}) \simeq \mathbb{R}^{2+1}$, thus exhibiting the first examples of properly discontinuous affine actions on $\mathbb{R}^n$ by a nonabelian free (or even non-polycyclic) group. In our terminology, his construction was to study $\rho_v$ for an appropriate, lengthening $v$. In \cite{golama}, Goldman, Labourie and Margulis refined and generalized these results by showing that among the space of $g$-compatible translational parts in $\text{Isom}^+(\mathfrak{so}_{2,1}(\mathbb{R}))$, or $g$-cocycles, those which define a properly discontinuous action \emph{coincide} (via $v \mapsto \rho_v$) with the convex open cone $C$ of lengthening deformations of $g$ in $T_g\mathcal{T}_S$. They prove this under the technical restriction that $S$ should have no parabolics, but this is likely irrelevant. See also \cite{dgk}.

In general the boundary of this open cone $C$ has many facets of varying dimensions, which one would like to relate to the geometry of the surface $S$, for example the geometry of its curve complex. Roughly speaking, to each point on $\partial C$ should correspond some curve or lamination which fails to be lengthened. This subject is still quite open; see however \cite{dgk2}.

\subsection{Main results}

For $S$ a pair of pants (hence $\dim \mathcal{T}_S=3$), one can show that $C$ is a cone on a triangle, and Charette, Drumm and Goldman \cite{3holed} even provided fundamental domains for the corresponding actions on $\mathbb{R}^{2+1}$. This was generalized in \cite{2holed} to $S$ a two-holed projective plane, and more recently in \cite{dgk2} to $S$ a surface of arbitrary complexity. The convex cone $C$ then has an intricate structure, related to the arc complex of $S$.

But let us back up. After the 3-holed sphere, the next (orientable) case, to which we will restrict now, is when $S$ is a one-holed torus: one still has $\dim \mathcal{T}_S=3$, so $C$ is a cone over a certain projective, planar, convex region $\Pi$. The boundary $\partial \Pi$ of $\Pi$ will turn out to contain many ``sides'', i.e.\ maximal segments not reduced to points: so we will refer to $\Pi$ as a ``polygon''. The term is somewhat abusive because the number of sides is infinite, but it is relevant in that we care about enumerating these sides. Namely, a side corresponds to the set of infinitesimal deformations that lengthen all simple closed curves except one --- this exceptional curve is then naturally associated to the side. As early as 2003, Charette \cite{charette} proved that the simple closed curves corresponding to the sides of $\Pi$ can have arbitrarily long expressions in the generators of $\pi_1(S)$. Charette, Drumm and Goldman prove in \cite{1holed} that in fact, all simple closed curves arise.
 Our purpose in this note is:

\begin{enumerate}[(A)]
\item \label{item1} to prove that for every hyperbolic metric $g$ on $S$, the polygon $\Pi$ has one side associated to each simple closed curve in $S$ (this recovers one of the main results of \cite{1holed}, and also follows from a special case of \cite{dgk2}: see \S 6 therein);
\item \label{item2} to extend this result to the case when the boundary component of $S$ is replaced with a cone singularity;
\item \label{item3} to provide formulas and estimates for quantities such as the side lengths of the infinite polygon $\Pi$ (in appropriate affine charts);
\item \label{item4} to analyze how $\Pi$ degenerates when the area of the singular hyperbolic surface $S$ goes to $0$.
\end{enumerate}
The following, precise formulation of \eqref{item2} can be seen as our main result: let $S$ be a one-holed torus and $\mathcal{T}$ the space of isotopy classes of hyperbolic metrics on $S$ whose completion admits either a geodesic boundary component (at the hole), or a cusp, or a cone singularity whose measure lies in $(0,2\pi)$. For every simple closed (geodesic) curve $\gamma$ in $S$, let $\ell_\gamma:\mathcal{T}\rightarrow \mathbb{R}$ be the associated length function.  
\begin{theorem} \label{thm:main-intro}
The deformation space $\mathcal{T}$ is a smooth $3$-manifold. For every $g\in \mathcal{T}$, the lengthening infinitesimal deformations of $g$ in $T_g\mathcal{T}$ form a cone over an infinite polygon $\Pi$ such that for every simple closed curve $\gamma$ in $S$, the differential $\mathrm{d}\ell_\gamma$ vanishes on a nondegenerate side of $\Pi$.   
\end{theorem}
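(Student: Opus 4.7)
The proof will split into three steps matching the three claims of the theorem, with the third being by far the most substantial. For smoothness, the plan is to use the Fricke trace parameterization of the character variety of $\pi_1(S)\cong F_2=\langle X,Y\rangle$, with $[X,Y]$ representing the hole. Writing $(x,y,z)=(\mathrm{tr}\,\rho(X),\mathrm{tr}\,\rho(Y),\mathrm{tr}\,\rho(XY))$, the Fricke identity gives $\mathrm{tr}\,\rho([X,Y])=x^2+y^2+z^2-xyz-2$. Geodesic boundary, cusp, and cone singularity of angle $\theta\in(0,2\pi)$ correspond respectively (after a sign convention) to this trace lying in $(-\infty,-2)$, $\{-2\}$, and $(-2,2)$; together they give the open condition $x^2+y^2+z^2-xyz<4$, so $\mathcal{T}$ embeds as a smooth open $3$-submanifold of $\mathbb{R}^3$.

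For the cone-over-polygon structure, compactness of $\mathbb{P}\mathcal{ML}(S)\cong S^1$ together with openness and convexity of the lengthening condition shows that the lengthening deformations form an open convex cone in $T_g\mathcal{T}\cong\mathbb{R}^3$, which projectivizes to a convex planar region $\Pi$. Simple closed curves on $S$ are indexed by Farey fractions $\mathbb{Q}\cup\{\infty\}$, with triples forming a Farey triangle corresponding to curves intersecting pairwise once. Fixing such a triple $(\alpha,\beta,\gamma)$ with traces $(x,y,z)$, the identity $\mathrm{tr}(AB)+\mathrm{tr}(AB^{-1})=\mathrm{tr}(A)\mathrm{tr}(B)$ produces the Farey flip rule $z\mapsto xy-z$, from which every simple closed length and differential is obtained by iteration, and $(\mathrm{d}\ell_\alpha,\mathrm{d}\ell_\beta,\mathrm{d}\ell_\gamma)$ is a basis of $T_g^*\mathcal{T}$.

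For the nondegenerate side associated to $\gamma$, the plan is to exhibit $v^*\in T_g\mathcal{T}$ with $\mathrm{d}\ell_\gamma(v^*)=0$ and $\mathrm{d}\ell_\delta(v^*)>0$ for every other simple closed $\delta$; by continuity and density of simple closed curves in $\mathbb{P}\mathcal{ML}$, such a $v^*$ then lies in the relative interior of the $2$-dimensional face of $\overline{C}$ supported by $\mathrm{d}\ell_\gamma=0$. In the dual basis of the previous paragraph I seek $v^*=(a,b,0)$ with $a,b>0$. Differentiating the flip rule $z\mapsto xy-z$ and passing to lengths via $z=2\cosh(\ell_\gamma/2)$ yields $\mathrm{d}\ell_{\gamma'}=A\,\mathrm{d}\ell_\alpha+B\,\mathrm{d}\ell_\beta+C\,\mathrm{d}\ell_\gamma$ with explicit $A,B>0$ and $C<0$, so the first Farey flip $\gamma'$ of $\gamma$ across $(\alpha,\beta)$ is lengthened throughout the quadrant; iterated flips, however, give constraints of mixed sign of the form $A'a+B'b>0$, and $(a,b)$ must be placed in the intersection of a countable family of open half-planes.

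The main obstacle is precisely this uniform positivity step: without care, the margin decays with Farey depth and the candidate $2$-dimensional face could collapse. I expect the remedy to be a quantitative comparison estimate bounding $\mathrm{d}\ell_\delta$ at $\delta$ of Farey depth $n$ from below by a definite positive combination of the base differentials, with coefficients bounded below uniformly in $n$; this should follow from a renormalized version of the Markov recursion matched to hyperbolic-geometric estimates. A secondary obstacle is the cone singularity case, where standard Teichm\"uller tools like Wolpert's twist formula and Kerckhoff convexity are classically phrased for complete metrics; staying within the trace/character setting throughout the nondegeneracy analysis should give a unified argument covering all three hole types at once.
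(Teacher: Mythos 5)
Your setup coincides with the paper's in substance: $\mathcal{T}$ is parameterized by Fricke trace coordinates $(x,y,z)$ (the paper's geometric Markoff maps, an open subset of $\{x,y,z>2\}$ --- note that the single inequality $x^2+y^2+z^2-xyz<4$ is not by itself the right condition, since it also contains non-geometric characters; you need the trace conditions $\Phi(R_{p/q})>2$ as well, which however keep the set open), the simple closed curves are indexed by Farey fractions, and the flip rule $z\mapsto xy-z$ generates everything. The smoothness claim and the identification of lengthening with positivity of all trace derivatives are fine.

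The genuine gap is exactly the step you flag as ``the main obstacle'': you never establish the uniform positivity needed to show the face supported by $\mathrm{d}\ell_\gamma=0$ is a nondegenerate segment, and the remedy you gesture at (quantitative lower bounds on the coefficients of $\mathrm{d}\ell_\delta$ in a fixed basis, uniform in Farey depth) is not what makes the argument work. The paper's mechanism has two parts, both missing from your plan. First, for a fixed region $R=R_\infty$ with neighbor fan $(R_n)_{n\in\mathbb{Z}}$, the Markoff recursion gives the Binet-type formula $\Phi(R_n)=\lambda a^n+\mu a^{-n}$ with $a+a^{-1}=\Phi(R)$; hence the infinitely many constraints $\Phi'(R_n)\geq 0$ collapse to just two, namely $\lambda'\geq 0$ and $\mu'\geq 0$, and the two endpoints of the candidate side are characterized by the derivative sequence $(\Phi'(R_n))_n$ being geometric of ratio $a^{\pm1}$. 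This immediately produces an explicit segment $\boldsymbol{\alpha}$ from $[0:1:a]$ to $[0:a:1]$ in the coordinates $[\Phi'(R_\infty):\Phi'(R_0):\Phi'(R_1)]$, not merely a single candidate point. Second, the remaining constraints are handled by a self-reproducing projective picture: the line coming from the next region $D$ chops only a corner off the quadrilateral circumscribed to $\boldsymbol{\alpha},\boldsymbol{\beta},\boldsymbol{\gamma}$, disjoint from those three segments (this is an explicit computation using $a,b,c,d>1$), and the configuration after the chop is projectively equivalent to the one before, so induction over the Farey tree closes the argument with no decay of margins to control. A secondary weakness of your plan: even if you produced one $v^*$ with $\mathrm{d}\ell_\gamma(v^*)=0$ and $\mathrm{d}\ell_\delta(v^*)>0$ for all other simple closed $\delta$, this would not by itself show the face is two-dimensional, since the positivity conditions form an infinite intersection of open sets and the face could a priori still be a single ray (as indeed happens for irrational laminations, and for the degenerate case where one trace equals $2$); you must exhibit a genuine segment, which is what the geometric-sequence characterization of the endpoints delivers.
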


Theorem \ref{thm:main-intro} will be proved in Section \ref{sec:derivatives}. The goal \eqref{item3}  will be the object of computational sections \ref{sec:computation} and \ref{sec:asymptotics}. The goal \eqref{item4} is dealt with in Sections \ref{sec:onecurve} and \ref{sec:allcurves}. 

\subsection{Comments}

$\bullet$ Perhaps the most striking aspect of \eqref{item4} is Theorem \ref{thm:allpinch}, saying that in the limit when the cone angle of $S$ becomes $2\pi$ and the diameter of $S$ goes to $0$, the polygon $\Pi$ becomes a round disk naturally identified with the space of normalized \emph{Euclidean} metrics on $S$ (in other words, with $\mathbb{H}^2$).

\smallskip \noindent $\bullet$
Note that when $(S,g)$ has a cone singularity, a lengthening deformation $v$ of $g$ no longer defines a proper affine action on $\mathbb{R}^{2+1}$. Nevertheless, it would be nice to prove that $v$ defines a flat complete Lorentzian manifold with some kind of singularity, for an appropriate definition of Lorentzian completeness. We leave this question for future work and concentrate on describing the lengthening deformations themselves.

\smallskip \noindent $\bullet$
In general, one should not make light of the fact that lengthening deformations, as defined in Section \ref{context}, are required to lengthen all \emph{irrational measured laminations} as well as simple closed curves. To be precise, the length function $\ell_\mu$ associated to an irrational measured lamination $\mu$ will vanish on a supporting plane of $\Pi$ that contains just one point of $\partial \Pi$ (not belonging to any side). The key fact here is continuity on the space of measured laminations $\mu$: the map taking $\mu$ to the associated \emph{current} in the unit tangent bundle of $S$ is continuous, and therefore so is $\mu \mapsto \ell_\mu\in T^*\mathcal{T}$, as Thurston noted (in the absence of cone singularities) in \cite{stretchmaps}. See also \cite{dgk}. However, we will abstain from discussing such technicalities in this paper, and deliberately keep the focus just on simple closed curves.

\smallskip \noindent $\bullet$
In going from \eqref{item1} to \eqref{item2} above, it is striking that the polygon $\Pi$ does not seem to undergo any qualitative change at all. This should be related to the fact that in a hyperbolic surfaces with cone singularities, simple geodesics stay away fom the cone points, at least if the cone angles are $<\pi$. Thus one should expect such stability of the cone $C$ (or its base $\Pi$) to hold even for more general surfaces. For singularities $>\pi$ the ``combinatorics'' of $C$ might conceivably undergo abrupt changes in general, because the singularities no longer repel simple closed curves. However for $S$ a one-holed torus this does not happen, due to the hyperelliptic involution (which preserves all simple closed curves): this explains why Theorem~\ref{thm:main-intro} holds all the way to cone angle $2\pi$. 

\subsection*{Acknowledgments}
This work sprang from discussions with Bill Goldman and Virginie Charette at a conference in Autrans in January 2010, and became a kind of unpublished  precursor to \cite{dgk} and \cite{dgk2}. I am grateful to Maxime Wolff for pointing out results from \cite{goldman}. The remarks of an anonymous referee greatly contributed to bringing this paper into a readable form.
It is a pleasure to see it appear in this volume in the honor of Michel Boileau. 

This work was partially supported by grants DiscGroup (ANR-11-BS01-013) and ETTT (ANR-09-BLAN-0116-01), and through the Labex CEMPI (ANR-11-LABX-0007-01).

\section{Markoff maps}\label{sec:markoff}

\subsection{Definitions}

Our main tool to describe metrics $g$ on the one-holed torus $S$ will be \emph{Markoff maps}. Markoff maps were invented by Bowditch in \cite{bowditch} as a fine bookkeeping device for maps from the free group on two generators to $\text{SL}_2(\mathbb{K})$, where $\mathbb{K}$ is a field. We briefly recall the definition below. It will rely on some facts from the very classical and beautiful theory of \emph{Farey sequences}, or ``combinatorics of~$\mathbb{Q}$'' (\cite[\S 3]{hardy}, \cite{ford}): in recent literature, see \cite[pp. 152--156]{bookofnumbers} for an introduction and \cite[\S 8]{bonahon} for the link with the one-holed torus $S$.

Endow $S$ with a marking, i.e.\ a homeomorphism to $(\mathbb{R}^2\smallsetminus \mathbb{Z}^2)/\mathbb{Z}^2$, so that every (unoriented, non-boundary-parallel) simple closed curve in $S$ is characterized by its \emph{slope}, a number $\frac{p}{q}\in \mathbb{P}^1\mathbb{Q}\simeq \mathbb{Q}\cup\{\infty\}$. Identify the boundary at infinity of $\mathbb{H}^2$ with the real projective line via the upper half plane model, and recall the \emph{Farey triangulation} of $\mathbb{H}^2$: rational points (in reduced form) $\frac{p}{q}$ and $\frac{p'}{q'}$ of $\mathbb{P}^1\mathbb{Q}\subset \partial_{\infty}\mathbb{H}^2$ are connected by a line of $\mathbb{H}^2$ if and only if $|pq'-qp'|=1$. This results in the ideal triangle $01\infty$ together with all its iterates (called Farey triangles) under reflections in its sides. This Farey triangulation is invariant under the group $\text{SL}_2(\mathbb{Z})$, or its quotient $\text{PSL}_2(\mathbb{Z})$, acting by M\"obius transformations on the upper half-plane of $\mathbb{C}$. 
This action is transitive on oriented Farey edges; in fact the element of $\mathrm{PSL}_2(\mathbb{Z})$ taking the edge $(\infty, 0)$ to $(\frac{p}{q},\frac{p'}{q'})$ is just $(\begin{smallmatrix}p&p'\\q&q'\end{smallmatrix})$ or $(\begin{smallmatrix}-p&p'\\-q&q'\end{smallmatrix})$. We say that $\frac{p}{q}$ and $\frac{p'}{q'}$ as above 
are \emph{Farey neighbors}. The group $\text{SL}_2(\mathbb{Z})$ also realizes the (orientation preserving) mapping class group by its action on $S=(\mathbb{R}^2\smallsetminus \mathbb{Z}^2)/\mathbb{Z}^2$; these two actions are compatible, via the ``slope'' function defined above.

Consider the infinite, trivalent \emph{Farey tree} $\Gamma\subset \mathbb{H}^2$ whose vertices are the centers of the Farey triangles, and whose edges connect the centers of adjacent triangles. Regions of the complement of $\Gamma$ are indexed by rational numbers $\frac{p}{q}$: let us denote by $R_{\frac{p}{q}}$ the region whose closure intersects the real projective line precisely at the point $\frac{p}{q}$, and by $\mathcal{R}$ the collection of all such regions as $\frac{p}{q}$ ranges over $\mathbb{P}^1\mathbb{Q}$.

\begin{definition}(From \cite{bowditch}.)
A \emph{Markoff map} is a map $$\Phi:\mathcal{R}\rightarrow \mathbb{R}$$ such that whenever $R,R',R'', R'''$ are distinct regions such that each pair among them is adjacent except $\{R,R'''\}$, one has 
\begin{equation} \Phi(R) + \Phi(R''') = \Phi(R')\Phi(R'')~.  \label{eq:markoff} \end{equation}
\end{definition}
In such a configuration there always exist two Farey neighbors $\frac{p}{q}, \frac{p'}{q'}$ such that $$(R,R',R'',R''')=\left ( R_{\frac{p+p'}{q+q'}} , R_{\frac{p}{q}} , R_{\frac{p'}{q'}} , R_{\frac{p-p'}{q-q'}} \right )~.$$
(In particular, $R$ is exchanged with $R'''$ if we replace the coprime integers $p',q'$ with $-p',-q'$.) In the sequel, we will typically represent Markoff maps $\Phi$ in the plane by finite trivalent subtrees of the Farey tree, together with numbers $\Phi(R)$ written in some of the regions $R=R_{\frac{p}{q}}$.

\subsection{Representations determine Markoff maps} \label{repmar} For any $\underline{A},\underline{B} \in \text{SL}_2\mathbb{R}$, 
$$\mathrm{tr}(\underline{A}\underline{B})+\mathrm{tr}(\underline{A}\underline{B}^{-1})=\mathrm{tr}(\underline{A})\mathrm{tr}(\underline{B})$$ 
(where $\mathrm{tr}$ is the trace operator),
because $\underline{B}+\underline{B}^{-1}=\mathrm{tr}(\underline{B})\cdot \text{Id}$ and $\mathrm{tr}$ is linear. This identity is formally the same as (\ref{eq:markoff}). In fact, suppose $\frac{p}{q}$ and $\frac{p'}{q'}$ are Farey neighbors, and suppose $\underline{A}$ and $\underline{B}$ are the holonomies (for the metric $g$ and a choice of basepoint) of simple closed curves of slopes $\frac{p}{q}$ and $\frac{p'}{q'}$ in the one-holed torus $S$. Then $\underline{A}\underline{B}$ and $\underline{A}\underline{B}^{-1}$ are (up to order) the holonomies of curves of slopes $\frac{p+p'}{q+q'}$ and $\frac{p-p'}{q-q'}$. Therefore, \emph{the metric $g\in\mathcal{T}$ defines a Markoff map} $\Phi=\Phi_g$, by associating to each region $R_{\frac{p}{q}}$ the trace, under the holonomy representation of $g$, of an element of $\pi_1(S)$ representing the simple closed curve of slope $\frac{p}{q}$.

Since holonomies live in $\mathrm{PSL}_2\mathbb{R}$, the possible lifts to $\mathrm{SL}_2\mathbb{R}$ actually define $4$ Markoff maps, equal up to sign. It follows from \cite[\S 2]{goldman} that for $g\in\mathcal{T}$, one of these Markoff maps takes only values $>2$: this is the one we shall refer to as $\Phi_g$.

\subsection{Trace identity}
\label{sec-traceid}
The trace identity
\begin{equation} \label{traceid}
\mathrm{tr}[\underline{A},\underline{B}]=\mathrm{tr}^2 \underline{A} + \mathrm{tr}^2 \underline{B} + \mathrm{tr}^2 (\underline{AB}) - \mathrm{tr} \underline{A} \mathrm{tr} \underline{B} \mathrm{tr} (\underline{AB}) -2
\end{equation}
holds for all $\underline{A},\underline{B}\in\mathrm{SL}_2\mathbb{R}$ (note that it is a well-defined real even if $\underline{A},\underline{B}$ are only given in $\mathrm{PSL}_2\mathbb{R}$). It will play an important role in this paper, because for $\alpha,\beta$ the standard generators of $\pi_1 S \simeq \mathbb{F}_2$, the commutator $[\alpha,\beta]=\alpha\beta \alpha^{-1}\beta^{-1}$ is the loop around the hole of $S$: thus its trace under the holonomy representation of $g$ (taking $\alpha$ to $\underline{A}$ and $\beta$ to $\underline{B}$) must be the above number, and belong to $(-2, 2)$ in the case of a conical singularity, or to $(-\infty, -2]$ in the case of a cusp or funnel (see \cite[\S 3]{goldman}).

\subsection{Markoff maps determine representations} 
\label{sec-marrep}
Equation (\ref{eq:markoff}) clearly shows that a Markoff map is completely determined by the values it takes on any three pairwise adjacent regions. It is also a classical, elementary result that in $\mathrm{SL_2\mathbb{R}}$, the traces of elements $\underline{A}, \underline{B}$ and $\underline{AB}$ determine $\underline{A}$ and $\underline{B}$ up to conjugation --- unless
$\mathrm{tr}[\underline{A},\underline{B}]$ (computed via \eqref{traceid}) is equal to $2$. (In the latter case, $\underline{A}$ and $\underline{B}$ can be simultaneously conjugated to upper triangular matrices, but only their diagonal part is determined by the traces; the associated diagonal representation factors through the abelianization $\mathrm{Ab}(\pi_1 S)\simeq \mathbb{Z}^2$.)

Up to this ambiguity when $\mathrm{tr}[\underline{A},\underline{B}]=2$, we can thus \emph{identify} conjugacy classes of representations with (certain) Markoff maps, by \S \ref{repmar}--\ref{sec-marrep}.

\subsection{The Markoff maps of hyperbolic metrics $g$} \label{curlix}
\begin{fact} \label{factii} (Contained in \cite[\S 3]{goldman}).
The Markoff maps $\Phi_g$ associated to metrics $g\in\mathcal{T}$ are exactly the Markoff maps $\Phi:\mathcal{R}\rightarrow \mathbb{R}$ satisfying
\begin{equation}\label{eqii}
\left \{
\begin{array}{rrcll}
\mathrm{(i)} & \Phi(R_{\frac{p}{q}}) & >& 2 & \text{for all coprime $p,q\in\mathbb{Z}$;} \\
\mathrm{(ii)} & K:= \mathrm{tr}\, \rho([\alpha, \beta])& <& 2.& 
\end{array}
\right .
\end{equation}
Such Markoff maps are called \emph{geometric}.
\end{fact}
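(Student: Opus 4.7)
The statement is an equivalence, so I treat the two implications separately.

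\textbf{Forward direction (necessity of (i) and (ii)).} Given $g\in\mathcal{T}$, I need to show $\Phi_g$ is well-defined (i.e.\ the sign-consistent lift exists and takes values $>2$) and that the boundary trace $K$ is $<2$. For (i), note that on any $g\in\mathcal{T}$ every (non-boundary-parallel) simple closed curve is essential and non-peripheral, so its $g$-geodesic representative has positive length; hence its holonomy is hyperbolic in $\mathrm{PSL}_2\mathbb{R}$ and has trace of absolute value $>2$ in either lift to $\mathrm{SL}_2\mathbb{R}$. The existence of a uniform choice of lifts making every simple-closed-curve trace positive (and hence $>2$) is precisely the content invoked at the end of \S\ref{repmar} from \cite[\S 2]{goldman}, and that lift is by convention $\Phi_g$. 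For (ii), the commutator $[\alpha,\beta]$ represents a loop around the hole: if the completion has a cusp the holonomy is parabolic, hence trace $-2$; if it has a funnel (geodesic boundary), the holonomy is hyperbolic with trace $<-2$ in the chosen lift; if it has a cone singularity of angle $\theta\in(0,2\pi)$, the holonomy is elliptic of rotation angle $\theta$ with trace in $(-2,2)$. In all three cases $K<2$, so (ii) holds.

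\textbf{Converse (sufficiency of (i) and (ii)).} Let $\Phi$ satisfy (i) and (ii). Pick three pairwise adjacent regions $R_\infty,R_0,R_1$ with values $a,b,c>2$, so that by (\ref{traceid}) the quantity $K=a^2+b^2+c^2-abc-2$ equals the value prescribed by (ii), in particular $K\neq 2$. By \S\ref{sec-marrep}, there exist matrices $\underline{A},\underline{B}\in\mathrm{SL}_2\mathbb{R}$, unique up to simultaneous conjugation, with
\[ \tr\underline{A}=a,\qquad \tr\underline{B}=b,\qquad \tr(\underline{A}\underline{B})=c. \]
This determines a representation $\rho:\pi_1(S)=\langle\alpha,\beta\rangle\to\mathrm{SL}_2\mathbb{R}$, and hence (by \S\ref{repmar}) an associated Markoff map $\Phi_\rho$. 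Since $\Phi$ and $\Phi_\rho$ agree on $\{R_\infty,R_0,R_1\}$ and both satisfy the recursion (\ref{eq:markoff}), an induction on the distance in the Farey tree from the triangle $\{\infty,0,1\}$ shows $\Phi=\Phi_\rho$ everywhere on $\mathcal{R}$.

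\textbf{The geometricity step.} It remains to exhibit a hyperbolic metric $g\in\mathcal{T}$ whose holonomy is (conjugate to) $\rho$; this is the main obstacle, since it must be shown that the representation $\rho$ just constructed actually uniformizes a surface in $\mathcal{T}$. My plan is to invoke Goldman's classification \cite[\S 3]{goldman}: the open set of conjugacy classes of representations $\pi_1(S)\to\mathrm{PSL}_2\mathbb{R}$ whose simple-curve traces are all $>2$ (condition (i)) and whose commutator trace is $<2$ (condition (ii)) is precisely the image, under the holonomy map, of $\mathcal{T}$. The geometric content behind this classification is that, under (i), one can pick two curves $\alpha,\beta$ of minimal translation length among a pair of Farey neighbors, and build an explicit fundamental domain for $\rho$ (a quadrilateral with sides paired by $\underline{A}$ and $\underline{B}$); condition (ii), together with the sign constraint on $K$, guarantees that the quadrilateral closes up around the hole with the correct local model (cusp, funnel, or cone angle in $(0,2\pi)$). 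I would sketch this fundamental-domain construction but not redo it in full: the sharp point I would emphasize is that $K<2$ strictly excludes the degenerate ``abelian'' case of \S\ref{sec-marrep}, and that this is exactly what is needed to turn a formal trace-equality into a genuine uniformizing representation.
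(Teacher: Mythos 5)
The paper gives no argument for this statement at all: it is stated as a \emph{Fact} and attributed wholesale to \cite[\S 3]{goldman}, so there is no internal proof to compare against. Your write-up is a reasonable reconstruction of what such a proof must contain, and in the end it takes the same route as the paper by deferring the genuinely hard step --- that a representation satisfying (i) and (ii) is the holonomy of an actual metric in $\mathcal{T}$ (cusp, funnel, or cone angle in $(0,2\pi)$) --- to the very same source. The forward direction and the reconstruction of $\Phi$ from three adjacent traces via the recursion \eqref{eq:markoff} are fine; the observation that $K<2$ rules out the reducible case of \S\ref{sec-marrep} is exactly the right point to isolate, and the fact that condition (i) forces the character to be an $\mathrm{SL}_2\mathbb{R}$- rather than $\mathrm{SU}(2)$-character (traces exceed $2$) is implicit but worth a word.

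One step in your forward direction is glossed too quickly. You assert that every essential non-peripheral simple closed curve has a geodesic representative of positive length and \emph{hence} hyperbolic holonomy with trace of absolute value $>2$. For a hyperbolic surface with a cone singularity of angle $\theta\geq\pi$, a length-minimizing representative of a free homotopy class need not avoid the cone point in general, and when it does not, one cannot directly read off hyperbolicity of the holonomy from the length of a broken geodesic. For the one-holed torus this is rescued by the hyperelliptic involution, which fixes the singular point and preserves every simple closed curve, forcing simple geodesics to stay in the smooth part for all cone angles up to $2\pi$ --- this is precisely the point the paper makes in its Comments when explaining why Theorem \ref{thm:main-intro} survives to angle $2\pi$. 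You should either invoke that argument or fold this case back into the citation of \cite{goldman}; as written, the sentence is only automatic for cone angles $<\pi$.
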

Note that \eqref{traceid} lets us compute $K$ of (\ref{eqii}.ii) from
$$
\begin{array}{lclclcl}
A&:=& \Phi(R_\infty)&=&\mathrm{tr}\,\rho(\alpha) &=& \mathrm{tr}\, \underline{A}\\                                 
B&:=& \Phi(R_0)&=&\mathrm{tr}\,\rho(\beta) &=&\mathrm{tr}\,\underline{B}\\   
C&:=& \Phi(R_1)&=&\mathrm{tr}\,\rho(\alpha\beta) &=& \mathrm{tr}\, \underline{AB},
\end{array}
$$
namely $K=A^2+B^2+C^2-ABC-2$. More generally this formula holds whenever $A,B,C$ are the values of $\Phi$ on three neighboring regions.
We will also be interested in the degenerations as $\Phi(R_{\frac{p}{q}})\rightarrow 2$ or $K\rightarrow 2$, i.e.\ the limit cases of \eqref{eqii}.

\begin{remark} \label{rem:linked}
Conditions (i) and (ii) above are highly overlapping in content. In fact, it is easy to see by \eqref{eq:markoff} and induction on the regions that (ii) implies (i) under the sole extra assumption that $A, B$ are $\geq 2$ (i.e.\ taking correct lifts to $\mathrm{SL}_2(\mathbb{R}))$.
In the opposite direction, (i) is known to imply $K\leq 2$ by \cite[Th.\ 5.2.1]{goldman}. 
\end{remark}

By Fact \ref{factii}, the deformation space $\mathcal{T}$ of isotopy classes of complete hyperbolic metrics $g$ on the once punctured torus, possibly with a cone singularity at the puncture, identifies with the space $\mathcal{X}$ of geometric Markoff maps $\Phi_g$ (hence the name ``geometric'').

Using $A, B, C$ as coordinates, the space $\mathcal{X}$ is an open subset of the octant $\{A,B,C\geq 2\}$ of $\mathbb{R}^3$, tangent to its three faces along their bisecting rays $(2,t,t)_{t\geq 2}$, $(t,2,t)_{t\geq 2}$, $(t,t,2)_{t\geq 2}$, and containing the round cone tangent at these same rays. Near $(2,2,2)$, the open set $\mathcal{X}$ looks to first order like the round cone, but far away the set of asymptotic directions of $\mathcal{X}$ is the full octant: see Figure \ref{fig:wedge}.

\begin{figure}[h!] \centering
\includegraphics[width=3.5cm]{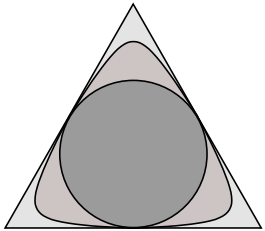}
\caption{Three slices of $\mathcal{X}$ at different values of $A+B+C-6$ (outwards: very small; $1$; very large) --- all scaled to fit into the same triangle.} \label{fig:wedge} 
\end{figure}

\subsection{Properties}
Let $\Phi>2$ be a geometric Markoff map, i.e.\ satisfy \eqref{eqii}.
Consider a region $R$ together with its ordered sequence of neighbors: up to changing the marking, $R=R_{\infty}$ and the neighbors are $(R_n)_{n\in \mathbb{Z}}$. Equation (\ref{eq:markoff}) yields $\Phi(R_{n+1})=\Phi(R_{\infty})\Phi(R_n)-\Phi(R_{n-1})$. Therefore, if $a>1$ is such that 
$$a+a^{-1}=A = \Phi(R_{\infty}),$$ 
then 
\begin{equation} \label{eq:binet}
\Phi(R_n)=\lambda a^n + \mu a^{-n}
\end{equation}
for some $\lambda, \mu$ independent of $n$. 
Using the $\cosh$ function to represent sums of exponentials such as \eqref{eq:binet}, this can be summarized as follows:

\begin{remark}
For $\Phi$ a geometric Markoff map, if $\Phi(R_{\infty})=2\cosh \ell$ then there exist $y>0$ and $x\in \mathbb{R}$ such that for all $n\in\mathbb{Z}$,
 $$\Phi(R_n)=2y\cdot\cosh(n\ell-x).$$
\end{remark}

\begin{proposition} \label{prop:commutator}
For a geometric Markoff map, 
$y>1$.
\end{proposition}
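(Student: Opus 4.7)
The plan is to compute the commutator trace $K$ of (\ref{eqii}.ii) directly from the $\cosh$ parametrization, and then exploit the inequality $K<2$. The key observation is that three pairwise adjacent regions can be taken to be $R_\infty$, $R_n$ and $R_{n+1}$ for any $n$, so I am free to apply the trace identity of \S\ref{sec-traceid} to this triple.

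First I would write $A=a+a^{-1}$ with $a=e^{\ell}>1$ and expand
\[
\Phi(R_n)=\lambda a^n+\mu a^{-n},\qquad \lambda=ye^{-x},\ \mu=ye^{x},\ \lambda\mu=y^{2}.
\]
Using the trace identity of \S\ref{sec-traceid} applied to $\Phi(R_\infty),\Phi(R_n),\Phi(R_{n+1})$, namely
\[
K=A^{2}+\Phi(R_n)^{2}+\Phi(R_{n+1})^{2}-A\,\Phi(R_n)\,\Phi(R_{n+1})-2,
\]
I would then substitute and simplify. A direct expansion gives
\[
\Phi(R_n)^{2}+\Phi(R_{n+1})^{2}-A\,\Phi(R_n)\,\Phi(R_{n+1})
=4\lambda\mu-\lambda\mu\,(a^{2}+2+a^{-2})
=-\lambda\mu\,(a-a^{-1})^{2},
\]
the cross terms in $a^{\pm 2n}$ cancelling out (as they must, $K$ being independent of the choice of adjacent triple). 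Combined with $A^{2}-2=(a-a^{-1})^{2}$, this yields the compact identity
\[
K-2 \;=\; \bigl(1-\lambda\mu\bigr)(a-a^{-1})^{2} \;=\; (1-y^{2})(a-a^{-1})^{2}.
\]

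Since $a>1$ (because $A=\Phi(R_\infty)>2$ by (i)), the factor $(a-a^{-1})^{2}$ is strictly positive. By hypothesis (ii) we have $K<2$, i.e.\ $K-2<0$, forcing $1-y^{2}<0$, and therefore $y>1$ (using $y>0$, which comes from the remark itself, or equivalently from the fact that (i) forces $\lambda,\mu$ to have the same sign, since otherwise $\Phi(R_n)$ would change sign for $|n|$ large).

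There is essentially no obstacle beyond carrying out this short algebraic manipulation; the only thing to notice is the conservation, along the sequence of neighbors of $R_\infty$, of the quantity $\Phi(R_n)^{2}+\Phi(R_{n+1})^{2}-A\,\Phi(R_n)\,\Phi(R_{n+1})$, which is what allows the proof to reduce to a single line. One could also phrase the same computation as a discrete Wronskian for the linear recurrence $\Phi(R_{n+1})=A\,\Phi(R_n)-\Phi(R_{n-1})$, but the direct substitution above seems the cleanest presentation.
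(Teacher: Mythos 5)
Your proof is correct and is essentially the paper's own argument: both evaluate the commutator trace identity on a triple of adjacent regions using the parametrization $\Phi(R_n)=2y\cosh(n\ell-x)$ (your $(a-a^{-1})^2$ is the paper's $4\sinh^2\ell$, and your identity $K-2=(1-y^2)(a-a^{-1})^2$ is exactly the paper's $K=2+4(1-y^2)\sinh^2\ell$), then conclude from $K<2$. The only blemish is the intermediate line ``$A^2-2=(a-a^{-1})^2$'', which should read $A^2-4=(a-a^{-1})^2$; your final formula is nevertheless correct.
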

\begin{proof}
Recall $y>0$. 
When $\tr \, \rho(\alpha)= 2\cosh \ell$ and $\tr \, \rho(\beta)= 2y \cdot \cosh (-x)$ and $\tr \, \rho(\alpha \beta)= 2y \cdot \cosh (\ell-x)$, the trace identity \eqref{traceid} reads
$$\tr \, \rho([\alpha,\beta])=2+4(1-y^2)\sinh^2 \ell.$$ By (\ref{eqii}.ii) we want this number to stay $<2$, hence $y>1$. 
\end{proof}
Note that the case $y=1$ corresponds to the area of the singular torus collapsing to $0$. The case $y=\coth \ell >1$ corresponds to $\tr \, \rho([\alpha, \beta])=-2$, a cusp (parabolic commutator with nonzero Euler class).

\section{Derivatives of Markoff maps} \label{sec:derivatives}

In this section we prove Theorem \ref{thm:main-intro}. Let $\Phi:\mathcal{R}\rightarrow \mathbb{R}$ be a Markoff map satisfying \eqref{eqii}, and let us focus on infinitesimal deformations of $\Phi$ (or of the corresponding singular hyperbolic metric $g\in\mathcal{T}$ on the one-holed torus).

If we let $(\Phi_t)_{t\geq 0}$ be a smooth deformation of $\Phi$ that lengthens all curves of slope different from $\infty$ but leaves $\Phi_t(R_{\infty})$ constant, then $\frac{\text{d}}{\text{d}t}_{|t=0}\Phi_t(R_n)\geq 0$ for all $n\in \mathbb{Z}$. By considering $n\rightarrow \pm\infty$ in (\ref{eq:binet}), this is equivalent to $\frac{\text{d}\lambda}{\text{d}t}\geq 0$ and $\frac{\text{d}\mu}{\text{d}t}\geq 0$. Therefore, the directions of deformation which increase all $\Phi(R_n)$ but fix $\Phi(R_{\infty})$, seen as projective triples $$\left [ \frac{\text{d}}{\text{d}t}\Phi_t(R_{\infty})~:~ \frac{\text{d}}{\text{d}t}\Phi_t(R_0) ~:~ \frac{\text{d}}{\text{d}t}\Phi_t(R_1) \right ]~,$$ form the projective segment $\boldsymbol{\alpha}$ from $[0:1:a]$ to $[0:a:1]$, containing $[0:1:1]$. 
\begin{remark} \label{rem:geoseq}
The two endpoints of $\boldsymbol{\alpha}$ are characterized by the fact that 
\begin{itemize} 
\item $\frac{\mathrm{d}}{\mathrm{d}t} \Phi_t(R_{\infty})=0$, and 
\item $\left ( \frac{\mathrm{d}}{\mathrm{d}t} \Phi_t(R_n)\right )_{n\in \mathbb{Z}}$ is a geometric sequence (necessarily of ratio $a$ or $a^{-1}$).
\end{itemize}
\end{remark}

\begin{figure}[h!] \centering
\psfrag{a}{$a$}
\psfrag{b}{$b$}
\psfrag{c}{$c$}
\psfrag{0}{$0$}
\psfrag{1}{$1$}
\psfrag{A}{$A'$}
\psfrag{B}{$B'$}
\psfrag{C}{$C'$}
\psfrag{ga}{$\boldsymbol{\alpha}$}
\psfrag{gb}{$\boldsymbol{\beta}$}
\psfrag{gc}{$\boldsymbol{\gamma}$}
\psfrag{aa}{$A=a+a^{-1}$}
\psfrag{bb}{$B=b+b^{-1}$}
\psfrag{cc}{$C=c+c^{-1}$}
\psfrag{dd}{}
\includegraphics[width=12cm]{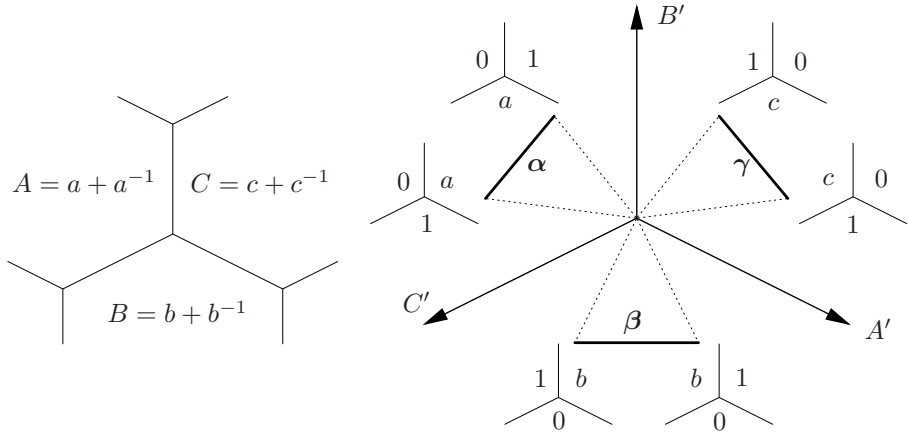}
\caption{The derivative of a Markoff map.} \label{fig:markoff} 
\end{figure}

Figure \ref{fig:markoff} shows the segment $\boldsymbol{\alpha}$ together with the other two segments $\boldsymbol{\beta}, \boldsymbol{\gamma}$ obtained by exchanging the roles of regions $R_{\infty},R_0,R_1$ cyclically. The notation is $$\Phi_t(R_{\infty})=A=a+a^{-1}~;~ \Phi_t(R_0)=B=b+b^{-1}~;~ \Phi_t(R_1)=C=c+c^{-1}~,$$ agreeing that $a,b,c>1$; and derivation with respect to $t$ is denoted by $'$. The left panel shows the original Markoff map $\Phi=\Phi_0$, more precisely $\Phi_0(R)=A,B,C$ written in regions $R=R_\infty, R_0, R_1$. The right panel shows the derivative of $\Phi$ (the numbers $\Phi'(R)=\frac{\text{d}}{\text{d}t}\Phi_t(R)=A', B', C'$ written inside regions $R$) in six different directions $\Phi'$ of 3-dimensional space $\mathbb{R}^3$. These 6 directions project in $\mathbb{P}^2\mathbb{R}$ to the endpoints of segments $\boldsymbol{\alpha}, \boldsymbol{\beta},\boldsymbol{\gamma}$. Namely, $\boldsymbol{\beta}$ connects $[1:0:b]$ to $[b:0:1]$ and $\boldsymbol{\gamma}$ connects $[1:c:0]$ to $[c:1:
0]$. The key observation, which is obvious from the diagram (or the coordinates), is that after projectivization \emph{the six directions form a convex hexagon in $\mathbb{P}^2\mathbb{R}$}.

Note also that $\boldsymbol{\alpha}, \boldsymbol{\beta}, \boldsymbol{\gamma}$ are all in the nonnegative octant. To conclude that the cone of deformations which lengthen all curves is nonempty and has one top-dimensional face for each slope, it is enough to make sure that the constraints $\Phi'(R)>0$ (for $R$ ranging over all regions) will only chop off the \emph{corners} of the projective triangle defined by the non-negative octant, never affecting the segments $\boldsymbol{\alpha}, \boldsymbol{\beta}, \boldsymbol{\gamma}$ themselves. In fact, it is enough to do this for $R$ equal to the other common neighbor $R_2$ ($\neq R_0$) of $R_\infty$ and $R_1$, and then use induction. This is accomplished in Section \ref{beeg}, and illustrated in Figure \ref{fig:markoff-bis}. 

\subsection{The infinite polygon of lengthening deformations} \label{beeg}

\begin{figure}[h!] \centering
\psfrag{un}{\sc i}
\psfrag{2}{\sc ii}
\psfrag{3}{\sc iii}
\psfrag{4}{\sc iv}
\psfrag{5}{\sc v}
\psfrag{6}{\sc vi}
\psfrag{7}{\sc vii}
\psfrag{8}{\sc viii}
\psfrag{9}{\sc ix}
\psfrag{10}{\sc x}
\psfrag{11}{\sc xi}
\psfrag{12}{\sc xii}
\psfrag{a}{$a$}
\psfrag{b}{$b$}
\psfrag{c}{$c$}
\psfrag{d}{$d$}
\psfrag{ga}{$\boldsymbol{\alpha}$}
\psfrag{gb}{$\boldsymbol{\beta}$}
\psfrag{gc}{$\boldsymbol{\gamma}$}
\psfrag{gd}{$\boldsymbol{\delta}$}
\psfrag{aa}{$A=a+a^{-1}$}
\psfrag{bb}{$B=b+b^{-1}$}
\psfrag{cc}{$C=c+c^{-1}$}
\psfrag{dd}{$D=d+d^{-1}$}
\psfrag{0}{$0$}
\psfrag{1}{$1$}
\psfrag{am}{$a^2$}
\psfrag{cm}{$c^2$}
\psfrag{ap}{$a^2+1$}
\psfrag{cp}{$c^2+1$}
\psfrag{dapa}{$\begin{array}{c} c+c^{-1} \\ +d(a+a^{-1}) \end{array}$}
\psfrag{dcpc}{$\begin{array}{c} a+a^{-1} \\ +d(c+c^{-1}) \end{array}$}
\psfrag{bapa}{$\begin{array}{c} b(a+a^{-1}) \\ +c+c^{-1} \\ \end{array}$}
\psfrag{bcpc}{$\begin{array}{c} b(c+c^{-1}) \\ +a+a^{-1} \\ \end{array}$}
\includegraphics[width=13cm]{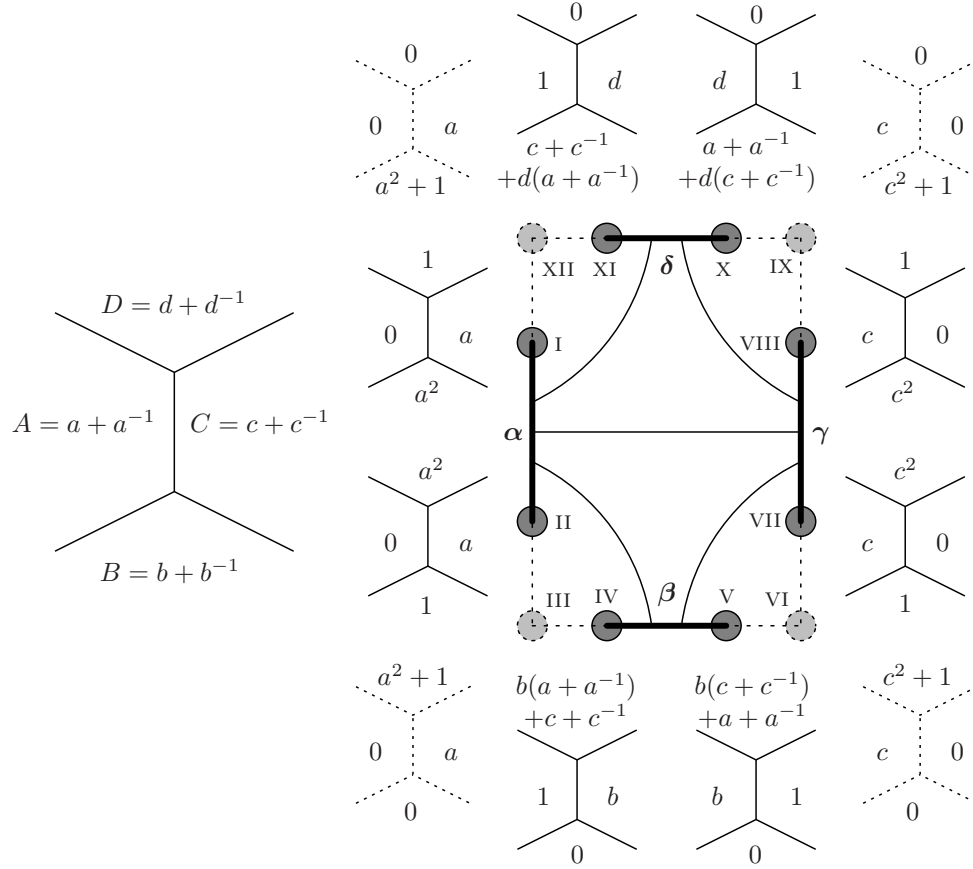}
\caption{The derivative of a Markoff map.} \label{fig:markoff-bis} 
\end{figure}

Figure \ref{fig:markoff-bis} uses the same conventions as Figure \ref{fig:markoff}, but shows a little more. The left panel shows part of a Markoff map (four regions). The right panel shows its derivatives in twelve different (projective) directions, numbered {\sc i} through {\sc xii}. Segment $\boldsymbol{\alpha}$ from Figure \ref{fig:markoff} corresponds to {\sc i--ii}; $\boldsymbol{\beta}$ is {\sc iv--v}; $\boldsymbol{\gamma}$ is {\sc vii--viii}. There is a new segment $\boldsymbol{\delta}=\text {\sc x--xi}$ whose projective coordinates are also computed. The line extending $\boldsymbol{\delta}$ chops a corner off the nonnegative octant circumscribed to $\boldsymbol{\alpha}, \boldsymbol{\beta}, \boldsymbol{\gamma}$, away from $\boldsymbol{\alpha}\cup \boldsymbol{\beta}\cup \boldsymbol{\gamma}$ and opposite $\boldsymbol{\beta}$. The lines extending $\boldsymbol{\alpha}, \boldsymbol{\beta}, \boldsymbol{\gamma}, \boldsymbol{\delta}$ meet at extra points {\sc iii--vi--ix--xii}, whose coordinates are also 
shown (computing them is an easy exercise), and which span a convex quadrilateral $Q$  in $\mathbb{P}^2\mathbb{R}$. The sides of $Q$ contain $\boldsymbol{\alpha}, \boldsymbol{\beta}, \boldsymbol{\gamma}, \boldsymbol{\delta}$ in their interiors because $a,b,c,d>1$.

We can repeat the argument inductively for all regions $R$: later lines defining the infinite polygon $\Pi$ chop off only corners, never affecting earlier segments $\boldsymbol{\alpha}, \boldsymbol{\beta},\dots$. The argument is the same at each stage, up to a projective transformation of the plane: see Figure \ref{fig:induct}.

This induction proves

\begin{theorem} \label{thm:main}
If $\Phi$ is a Markoff map satisfying \eqref{eqii}, then the cone of lengthening deformations projectivizes to a projective polygon $\Pi$ contained in $Q$ and containing the convex hull of $\boldsymbol{\alpha}\cup \boldsymbol{\beta}\cup \boldsymbol{\gamma}\cup \boldsymbol{\delta}$. Moreover, $\Pi$ has one nondegenerate side for each rational slope; for example the sides $\boldsymbol{\alpha}, \boldsymbol{\beta}, \boldsymbol{\gamma}, \boldsymbol{\delta}$ correspond to the slopes attached to the regions carrying $A,B,C,D$.
\end{theorem}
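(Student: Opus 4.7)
The plan is to argue inductively over an exhaustion of the Farey tree, realizing $\Pi$ as a decreasing limit of convex projective polygons $P_n \subset \mathbb{P}^2 \mathbb{R}$, each defined by the finitely many constraints $\Phi'(R) \geq 0$ for the regions $R$ revealed through step $n$. The first two stages are already done in the excerpt: $P_{-1}$ is the nonnegative octant triangle $T_0$ cut out by $\Phi'(R_\infty), \Phi'(R_0), \Phi'(R_1) \geq 0$, and $P_0 = Q$ is obtained by adding the constraint from $R_2$, as in Figure \ref{fig:markoff-bis}. At each subsequent step one additional region $R_{new}$ is revealed; being a newly visible vertex of the Farey tree, it is adjacent to exactly two previously known regions, so Equation \eqref{eq:markoff} expresses $\Phi(R_{new})$ via its three old neighbors.

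For the inductive step, I would exploit the transitive action of $\mathrm{SL}_2(\mathbb{Z})$ on ordered triples of pairwise adjacent regions: the local projective picture of the new constraint line $\{\Phi'(R_{new}) = 0\}$ relative to the three pre-existing constraint lines involving its neighbors is always projectively equivalent to the configuration computed in Figure \ref{fig:markoff-bis}. This forces the new line to chop off only a corner of $P_n$, disjoint from every previously established nondegenerate side, and to produce one new nondegenerate segment attached to $R_{new}$, whose endpoints are the intersections with two already-present constraint lines. Nondegeneracy follows from $\Phi(R_{new}) > 2$, which forces the corresponding parameter $d$ (in the notation of Figure \ref{fig:markoff-bis}) to exceed $1$. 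Taking the limit $\Pi = \bigcap_n P_n$ yields a convex polygon containing a nondegenerate side attached to every rational slope and contained in $P_0 = Q$; the reverse inclusion $\mathrm{conv}(\boldsymbol{\alpha} \cup \boldsymbol{\beta} \cup \boldsymbol{\gamma} \cup \boldsymbol{\delta}) \subset \Pi$ is then immediate from convexity and the fact that each of these segments is a side of $\Pi$ by construction.

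The main obstacle is the corner-chopping claim itself. What must be checked concretely, just once in the base case, is that the extensions of the four constraint lines in Figure \ref{fig:markoff-bis} arrange into a convex quadrilateral $Q$ whose sides are ordered so that any fifth line coming from a ``next'' region lies entirely on one side of each segment $\boldsymbol{\alpha}, \boldsymbol{\beta}, \boldsymbol{\gamma}, \boldsymbol{\delta}$. This reduces to an explicit computation of the intersection points labelled {\sc iii, vi, ix, xii} in Figure \ref{fig:markoff-bis} and a strict inequality verification using $a, b, c, d > 1$, i.e.\ $\Phi > 2$ everywhere. A minor point to verify along the way is that the change of chart given by $\mathrm{SL}_2(\mathbb{Z})$ transfers not only the combinatorial configuration of the four regions but also the strict-positivity conditions on the trace values, which follows from the invariance of geometricity \eqref{eqii} under the mapping class group action on the Farey tree.
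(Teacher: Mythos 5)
Your proposal is correct and follows essentially the same route as the paper: an explicit base-case computation of the quadrilateral $Q$ and of the segments $\boldsymbol{\alpha},\boldsymbol{\beta},\boldsymbol{\gamma},\boldsymbol{\delta}$ (Figure \ref{fig:markoff-bis}), followed by an induction over the Farey tree in which each new constraint line chops off only a corner, the inductive step being the base case transported by a projective change of coordinates. Your explicit appeal to the transitivity of $\mathrm{SL}_2(\mathbb{Z})$ on adjacent triples of regions, together with the preservation of condition \eqref{eqii}, is precisely the justification the paper leaves implicit in the phrase ``the argument is the same at each stage, up to a projective transformation of the plane.''
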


\begin{figure}[h!] \centering
\psfrag{a}{$\boldsymbol{\alpha}$}
\psfrag{b}{$\boldsymbol{\beta}$}
\psfrag{c}{$\boldsymbol{\gamma}$}
\psfrag{d}{$\boldsymbol{\delta}$}
\includegraphics[width=4.5cm]{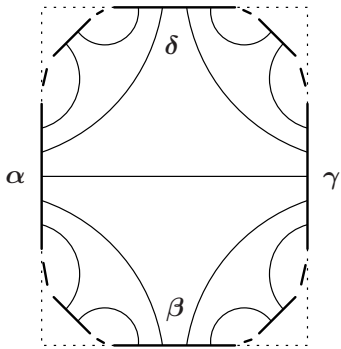}
\caption{Illustration of the induction proving Theorem \ref{thm:main}. (Curved arcs inside the infinite polygon $\Pi$ are there only to indicate the Farey combinatorics.)} \label{fig:induct} 
\end{figure}

Theorem \ref{thm:main-intro} is equivalent to Theorem \ref{thm:main}, as $\mathcal{T}$ identifies with the space of Markoff maps satisfying \eqref{eqii}.

In Sections \ref{sec:onecurve}--\ref{sec:allcurves}, we will detail how Theorem \ref{thm:main-intro} can degenerate: 
\begin{itemize} \item First, when $\Phi(R)$ becomes equal to $2$ for one isolated region $R$ (Theorem~\ref{thm:onepinch}): the corresponding side of $\Pi$ collapses to a point at which $\partial \Pi$ is pinched between two conics with a contact of highest possible order, $4$.
\item  Then, when $\Phi(R)=2$ for all regions $R$ simultaneously (Theorem \ref{thm:allpinch}): $\partial \Pi$ becomes a conic itself. 
\end{itemize}
But first we make some explicit computations.

\section{Computations} \label{sec:computation}

Let $\Phi:\mathcal{R}\rightarrow (2,+\infty)$ be a geometric Markoff map, i.e.\ satisfy \eqref{eqii}. In this section we give some quantitative estimates for the shape of the associated infinite polygon $\Pi$. More precisely, we shall in \eqref{eq:endpoints} give explicit formulas for the relative positions in $\mathbb{P}^2\mathbb{R}$ of an edge $E_R=E_{R_\infty}$ of $\Pi$ and all its Farey neighbors $(E_{R_n})_{n\in \mathbb{Z}}$.

Section \ref{sec:asymptotics} will then deal with asymptotics: for example, we will show that near the ends of $E_R$, ``most'' of the perimeter $\partial \Pi$ is taken up by the union of the edges $E_{R_n}$, leaving only very small interstices for the $E_{R_s}$ with $s\notin\mathbb{Z}$ (Remark \ref{interstices}).

Consider a region $R=R_{\infty}$ of the Markoff map $\Phi>2$, and the family of adjacent regions $(R_n)_{n\in \mathbb{Z}}$. by Proposition \ref{prop:commutator} there exist reals $\ell>0, y>1$ and $x$ such that 
\begin{equation} \label{eq:zo}
\begin{array}{rcl}
\Phi(R)&=& 2\cdot \cosh \ell \\ 
\Phi(R_n)&=& 2y\cdot \cosh (n\ell-x)~\text{ for all $n\in \mathbb{Z}$.}
\end{array}
\end{equation}

Let the parameters $\ell, y, x$ depend on a real variable $t$ and suppose $$\frac{\text{d}}{\text{d}t}(\ell, y, x)=(L,Y,X)~.$$ Then 
\begin{equation} \label{eq:zozo}
\begin{array}{llcl}
\Phi'(R) & :=  \frac{\text{d}}{\text{d}t}\Phi(R) &=& 2L \sinh \ell \\ 
\Phi'(R_n) & :=  \frac{\text{d}}{\text{d}t}\Phi(R_n) &=& 2Y \cosh (n\ell-x) + 2y(nL-X)\sinh (n\ell-x)~.
\end{array}
\end{equation}
We will use $L,Y,X$ as coordinates in the space of infinitesimal deformations of $\Phi$.

\subsection{The main edge}
Apply Remark \ref{rem:geoseq}: the two endpoints $P^+, P^-$ of the side $E_R$ of the projective polygon $\Pi$ corresponding to $R$ are the projective triples $[L^{\pm}:Y^{\pm}:X^{\pm}]$ such that 
$$ \left \{  \begin{array}{l} 
\Phi'(R) =0~\text{ (i.e.\ $L=0$), and} \\ 
\left ( \Phi'(R_n)\right )_{n\in \mathbb{Z}} ~\text{ is a geometric progression.}
\end{array} \right .$$
In other words, $P^{\pm}=[L^{\pm}:Y^{\pm}:X^{\pm}]=[0,y,\pm 1]$.

\subsection{The neighboring edges} \label{sec:neighboring}
Similarly, by Remark \ref{rem:geoseq}, the two endpoints $P_n^+, P_n^-$ of the side $E_{R_n}$ of the projective polygon $\Pi$ corresponding to $R_n$ are the projective triples $[L:Y:X]=[L_n^{\pm}:Y_n^{\pm}:X_n^{\pm}]$ such that 
$$ \left \{  \begin{array}{l} 
\Phi'(R_n) =0~\text{ and} \\ 
\Phi'(R_{n+1})~,~ \Phi'(R)~,~ \Phi'(R_{n-1}) ~\text{ is a geometric progression.}
\end{array} \right .$$
By Theorem \ref{thm:main}, the edge $E_{R_n}$ lies entirely on one side of $E_R$ in any affine chart of the convex polygon $\Pi$: therefore we can normalize to $L=1$ when solving the above identities. 
The first identity $\Phi'(R_n)=0$ can then be written, using \eqref{eq:zozo},
$$X=\frac{Y}{y\cdot \tanh (n\ell-x)}+n$$
which will allow us to replace $X$ or $n-X$ with its value in the sequel. If we abbreviate $n\ell-x$ as $\xi$, then the second identity, $\Phi'^2(R)=\Phi'(R_{n+1})\Phi'(R_{n-1})$, can be written (for $L=1$)
\begin{eqnarray*} 
\sinh^2 \ell 
&=&
[Y\cdot \cosh (n\ell-x+\ell) + y(n-X+1)\sinh (n\ell-x+\ell)] 
\\ && 
\times 
[Y\cdot \cosh (n\ell-x-\ell) + y(n-X-1)\sinh (n\ell-x-\ell)] \\
&=& \textstyle{\left [Y\, \cosh (\xi+\ell) - \left ( \frac{Y}{\tanh \xi}-y \right )\sinh (\xi+\ell)\right ]} 
\\ && 
\times  
\textstyle{\left [Y\, \cosh (\xi-\ell) - \left ( \frac{Y}{\tanh \xi}+y \right )\sinh (\xi-\ell)\right]} 
\\ 
&=& 
Y^2\cosh(\xi+\ell)\cosh(\xi-\ell) + \left [ \frac{Y^2}{\tanh^2 \xi}-y^2\right ] \sinh(\xi+\ell)\sinh(\xi-\ell) 
\\ && 
\hspace{20pt} 
-\frac{Y^2}{\tanh \xi} [\cosh (\xi+\ell) \sinh (\xi-\ell)+\cosh (\xi-\ell)\sinh(\xi+\ell)] 
\\ && 
\hspace{20pt}\hspace{20pt} 
+Yy [\cosh (\xi-\ell)\sinh(\xi+\ell)-\cosh(\xi+\ell)\sinh(\xi-\ell)] \\
&=& Y^2 \left ( \cosh^2 \xi + \sinh^2 \ell + \frac{\sinh^2 \xi-\sinh^2 \ell}{\tanh^2 \xi} - \frac{\sinh 2 \xi}{\tanh \xi} \right ) 
\\ && 
\hspace{20pt} 
+Yy\sinh 2\ell + y^2 (\sinh^2 \ell-\sinh^2 \xi) 
\\
&=& -Y^2 \frac{\sinh^2 \ell}{\sinh^2 \xi} +2Yy\sinh \ell \cosh \ell + y^2 (\sinh^2 \ell-\sinh^2 \xi)~;
\end{eqnarray*}
hence $\displaystyle{Z:=\frac{Y}{y}}$ is a root of the polynomial
$$ Z^2 \frac{\sinh^2 \ell}{\sinh^2 \xi} - 2Z \sinh \ell \cosh \ell + \left [\sinh^2 \xi -(1-y^{-2})\sinh^2 \ell \right ]$$
with discriminant 
\begin{eqnarray*} \Delta &:=& (\sinh \ell \cosh \ell)^2 - \sinh^2 \ell + (1-y^{-2})\frac{\sinh^4 \ell}{\sinh^2 \xi} \\ &=& \frac{\sinh^4 \ell}{\sinh^2 \xi} \cdot ( \cosh^2 \xi -y^{-2})~.
\end{eqnarray*}
Recall $y>1$ by Proposition \ref{prop:commutator}, so $\Delta>0$.
We thus find the two projective triples $P_n^{\pm}=[L_n^{\pm}:Y_n^{\pm}:X_n^{\pm}]$ given by
\begin{eqnarray}
L_n^{\pm}&=&1 \notag \\
Y_n^{\pm}&=& \frac{y\cdot \sinh ^2 \xi_n}{\tanh \ell} \pm y\cdot \sinh \xi_n \sqrt{\cosh^2 \xi_n -y^{-2}} \label{eq:endpoints} \\
X_n^{\pm}&=& \frac{\sinh \xi_n \cosh \xi_n}{\tanh \ell} \pm \cosh \xi_n \sqrt{\cosh^2 \xi_n -y^{-2}} + n \notag
\end{eqnarray}
(using the shorthand $\xi_n=n\ell-x=\xi$ as before). Do not overlook the ``$+n$'' at the end of the expression of $X_n^{\pm}$.

\subsection{Limits near $\pm\infty$}
Since $\cosh \xi \sim \pm \sinh\xi$ at $\pm\infty$ and $\frac{1}{\tanh \ell} \pm 1 = \frac{e^{\pm \ell}}{\sinh \ell}$, the limits of $P_n^{\pm}$ as $n$ (and therefore~$\xi$) goes to $+\infty$ and $-\infty$ are computed as follows:
\begin{eqnarray*}
\left [ L_{+\infty}: Y_{+\infty}: X_{+\infty} \right ] &:=& \left [0:y \frac{e^{\pm\ell}}{\sinh \ell}: \frac{ e^{\pm\ell}}{\sinh \ell} \right ] = [0:y:1] = P^+\\
\left [ L_{-\infty}: Y_{-\infty}: X_{-\infty} \right ] &:=& \left [0:y \frac{e^{\pm\ell}}{\sinh \ell}: \frac{-e^{\pm\ell}}{\sinh \ell} \right ] = [0:y:-1]= P^-~.
\end{eqnarray*}
As expected, the sides $E_{R_n}$ of the infinite polygon $\Pi$ accumulate near the two endpoints $P^{\pm}$ of $E_R$ as $n$ goes to $+\infty$ or $-\infty$. See Figure \ref{fig:parabole} for a representation of the polygon $\Pi$ in the $(X,Y)$-plane.

\begin{figure}[h!] \centering
\psfrag{A}{$Y$}
\psfrag{B}{$X$}
\psfrag{Pmp}{$P_{-1}^+$}
\psfrag{Pm}{$P^+$}
\psfrag{Pp}{$P^-$}
\psfrag{P0m}{$P_0^-$}
\psfrag{P0p}{$P_0^+$}
\psfrag{P1m}{$P_1^-$}
\psfrag{P1p}{$P_1^+$}
\psfrag{P2m}{$P_2^-$}
\psfrag{P2p}{$P_2^+$}
\psfrag{P3m}{$P_3^-$}
\psfrag{Pi}{$\Pi$}
\includegraphics[width=8cm]{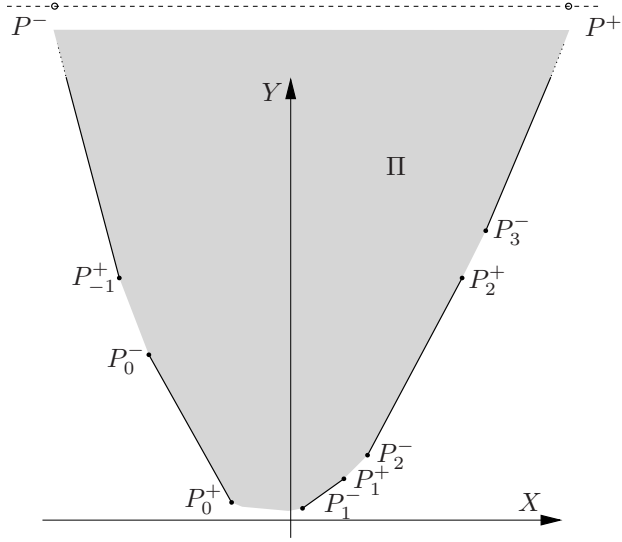}
\caption{A few sides $E_{R_n}=[P_n^-P_n^+]$ of the infinite polygon $\Pi$. The side $E_R=[P^-P^+]$ is at infinity (dotted). This is the affine chart $L=1$.} \label{fig:parabole} 
\end{figure}

\section{Asymptotics} \label{sec:asymptotics}

Let us now estimate the shape of the polygon $\Pi$, focussing on values of $n$ near $+\infty$ (the case of $-\infty$ is similar). Define
$$ \Xi_n:=e^{\xi_n}=e^{n\ell-x}~ \text{ for all }~ n\in \mathbb{Z},$$ so that $(\Xi_n)_{n\in\mathbb{Z}}$ is a geometric sequence of ratio $e^\ell$. Note that $$\sqrt{1+u z + v z^2}=1+\frac{u}{2}z+\left (\frac{v}{2}-\frac{u^2}{8} \right ) z^2+O(z^3)$$ for all reals $u,v$: therefore 
\begin{eqnarray*}
\sqrt{\cosh^2 \xi_n -y^{-2}}
&=& \frac{1}{2}\sqrt{\Xi_n^2+(2-4y^{-2})+\Xi_n^{-2}} \\
&=& \frac{\Xi_n}{2} + (1-2y^{-2})\frac{\Xi_n^{-1}}{2} + (2y^{-2}-2y^{-4})\frac{\Xi_n^{-3}}{2} +O(\Xi_n^{-5})
\end{eqnarray*}
and finally (normalizing by $4$ and $\frac{4}{y}$ to make the right-hand sides more pleasant)
\begin{eqnarray*}
\frac{4}{y}Y_n^+&=&\frac{e^{\ell}}{\sinh \ell}\Xi_n^2 -\left ( \frac{2}{\tanh \ell}+2y^{-2} \right ) + \left ( \frac{e^{-\ell}}{\sinh \ell} +4y^{-2}-2y^{-4}\right )\Xi_n^{-2}+O(\Xi_n^{-4})
\\ 
\frac{4}{y}Y_n^-&=& \frac{e^{-\ell}}{\sinh \ell}\Xi_n^2 - \left (\frac{2}{\tanh \ell}-2y^{-2}\right ) + \left ( \frac{e^{\ell}}{\sinh \ell} -4y^{-2}+2y^{-4}\right )\Xi_n^{-2}+O(\Xi_n^{-4})
\\ 
4X_n^+&=& \frac{e^{\ell}}{\sinh \ell}\Xi_n^2 + \left ( 2-2y^{-2} \right ) + \left ( \frac{-e^{-\ell}}{\sinh \ell} -2 y^{-4} \right ) \Xi_n^{-2}+4n+O(\Xi_n^{-4}) 
\\
4X_n^-&=& \frac{e^{-\ell}}{\sinh \ell}\Xi_n^2 - \left ( 2 - 2y^{-2} \right ) + \left ( \frac{-e^{\ell}}{\sinh \ell} +2 y^{-4} \right ) \Xi_n^{-2}+4n+O(\Xi_n^{-4})~.
\end{eqnarray*}

\subsection{Sides fill up} \label{edgesfillup}
This allows us to compute the side and chord vectors of $\Pi$: if $P_n^{\pm}=(X_n^{\pm}, Y_n^{\pm})$, then the vector directing a side $E_{R_n}=[P_n^-, P_n^+]$ of $\Pi$ has coordinates $(X_n^+-X_n^-)$ and $(Y_n^+-Y_n^-)$ given by
\begin{eqnarray*}
\frac{4}{y}(Y_n^+-Y_n^-)&=& 2\Xi_n^2 -4y^{-2} + O(\Xi_n^{-2})
\\ 4(X_n^+-X_n^-)&=& 2\Xi_n^2 +4 - 4 y^{-2} + O(\Xi_n^{-2}).
\end{eqnarray*}
The vector directing a chord $P_n^+ P_{n+1}^-$ has coordinates given by
\begin{eqnarray*}
\frac{4}{y}(Y_{n+1}^- -Y_n^+)&=& 4 y^{-2} - (4y^{-2}-2y^{-4})(1+e^{-2\ell})\Xi_n^{-2} +O (\Xi_n^{-4})
\\ 4(X_{n+1}^--X_n^+)&=& 4 y^{-2} + 2y^{-4}(1+e^{-2\ell})\Xi_n^{-2} + O(\Xi_n^{-4}).
\end{eqnarray*}
Note that for large $n$ the vertex $P_n^+$ is roughly $$\frac{y^2\Xi_n^2}{2}\gg 1$$ times closer to $P_{n+1}^-$ than to $P_n^-$. Therefore,
\begin{remark} \label{interstices} The proportion of the perimeter of $\Pi$ filled up by the edges $E_{R_n}=[P_n^-, P_n^+]$ grows asymptotically to full measure as $n$ goes to infinity.\end{remark}

\begin{observation}
With some work, one can very likely prove that the complement in $\partial \Pi$ of the union $\displaystyle{\bigcup_{\frac{p}{q} \in \mathbb{P}^1\mathbb{Q}} E_{R_{\frac{p}{q}}}}$ has Hausdorff dimension $0$. See \cite[\S 10]{stretchmaps}, \cite{glmm}.
\end{observation}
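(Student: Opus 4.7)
The plan is to realize $\partial \Pi \setminus \bigcup_{p/q} E_{R_{p/q}}$ as the attractor of a Moran-type refinement and show that its $\alpha$-dimensional Hausdorff measure vanishes for every $\alpha>0$. Each point $p$ of the complement corresponds, via the Farey combinatorics of Theorem~\ref{thm:main}, to an irrational $s \in \mathbb{P}^1\mathbb{R}\setminus\mathbb{P}^1\mathbb{Q}$ and to a nested decreasing sequence of arcs $I_1 \supset I_2 \supset \cdots$ in $\partial\Pi$ with $\bigcap_k I_k = \{p\}$: each $I_k$ is an interstice bounded by two consecutive already-inserted Farey-neighbor edges of $\Pi$, and $I_{k+1} \subset I_k$ picks one of the countably many sub-arcs produced by adjoining the family of new edges $E_{R_n}$ around the relevant mediant region.

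The core quantitative input is supplied by Section~\ref{sec:asymptotics}. Working in a bounded affine chart for $\Pi$ (for instance normalizing by $Y$ rather than $L$, so that the vertices $P^\pm$ become finite), the formulas \eqref{eq:endpoints} imply that the interstice $[P_n^+, P_{n+1}^-]$ between consecutive edges $E_{R_n}, E_{R_{n+1}}$ has linear size $\sim \Xi_n^{-4}$, whereas $E_{R_n}$ itself has size $\sim \Xi_n^{-2}$; the interstice-to-edge ratio is therefore $O(\Xi_n^{-2}) = O(e^{-2|n|\ell})$, shrinking super-exponentially as $|n|\to\infty$. After projectively rescaling each parent interstice to unit size, one Farey refinement thus splits it into sub-interstices of relative sizes $r_n \leq C\,e^{-2|n|\ell}$, so that $\sum_n r_n^\alpha$ converges for every $\alpha>0$. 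A standard Moran bound then reads
\begin{equation*}
\mathcal{H}^\alpha\!\left( \partial\Pi \setminus \bigcup_{p/q\in\mathbb{P}^1\mathbb{Q}} E_{R_{p/q}}\right) \;\leq\; \liminf_{k\to\infty}\, \sum_{I_k} |I_k|^\alpha \;\leq\; \liminf_{k\to\infty}\, \prod_{j=1}^k \sigma_j(\alpha),
\end{equation*}
where $\sigma_j(\alpha)$ denotes the supremum over depth-$j$ parent interstices of the associated one-step sum $\sum_n r_n^\alpha$. The key observation is that along every infinite Farey ray the Markoff values $\Phi(R)$ (and with them the parameters $\ell_R$) grow without bound, by iterated use of \eqref{eq:markoff} together with $\Phi>2$; hence $\sigma_j(\alpha) \to 0$ for each fixed $\alpha>0$, the infinite product vanishes, and $\mathcal{H}^\alpha \equiv 0$.

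The main obstacle is controlling the uniformity of the projective rescalings between successive refinement steps. Each change of chart can introduce multiplicative distortion depending on how elongated the current interstice is, so a priori the one-step contractions $\sigma_j(\alpha)$ need not be uniformly bounded. A clean implementation, in the spirit of \cite[\S10]{stretchmaps} and \cite{glmm}, would recast the construction as a symbolic shift on sequences in $\mathbb{Z}^{\mathbb{N}}$ and establish a ``pressure at zero'' statement: for every $\alpha>0$,
\begin{equation*}
\limsup_{k\to\infty}\; \tfrac{1}{k} \log \sum_{(n_1,\ldots,n_k)} (r_{n_1}\cdots r_{n_k})^\alpha \;=\; -\infty.
\end{equation*}
The quantitative estimates of Section~\ref{sec:asymptotics} supply the decay $r_n \lesssim e^{-2|n|\ell}$; the remaining work is the bookkeeping needed to propagate this decay through the projective distortions inherent to iteration along arbitrary Farey branches, and this is where I expect the genuine difficulty to lie.
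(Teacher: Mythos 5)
First, a point of calibration: the paper does not prove this statement. It is deliberately phrased as an unproved observation (``with some work, one can very likely prove\dots''), with pointers to \cite[\S 10]{stretchmaps} and \cite{glmm}. So there is no in-paper argument to compare yours against; what you have written is an attempt at the ``some work'', and it should be judged as such. Your overall strategy --- cover the complement by the nested interstices of the Farey refinement, read off the one-step contraction ratios from the asymptotics of Section~\ref{sec:asymptotics}, and kill $\mathcal{H}^\alpha$ for every $\alpha>0$ by a Moran-type product, using the unbounded growth of the Markoff values along the tree to make the one-step sums eventually small --- is the natural route and, I believe, the intended one. Your chart computation (edge of size $\sim\Xi_n^{-2}$, interstice of size $\sim\Xi_n^{-4}$ in a bounded chart) is consistent with \S\ref{edgesfillup}, and you are right that the crucial point is that $\sum_n r_n^\alpha$ exceeds $1$ for small $\alpha$ at any fixed $\ell$, so that dimension $0$ can only come from $\ell\to\infty$ along the refinement.

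That said, two steps are glossed over beyond the distortion issue you already flag honestly. (1) The inference from ``$\Phi$ grows along every infinite Farey ray'' to ``$\sigma_j(\alpha)\to 0$'' requires growth that is \emph{uniform in depth}, i.e.\ that $\{R:\Phi(R)\leq M\}$ is finite for every $M$; this is Bowditch's finiteness/connectedness lemma for Markoff maps with $\Phi>2$ \cite{bowditch}, not merely ``iterated use of \eqref{eq:markoff}''. (2) More substantively, the largest child interstice in the fan around a deep region $R$ is the one near the minimum of $n\mapsto \Phi(R_n)=2y_R\cosh(n\ell_R-x_R)$, where the asymptotic regime $\Xi_n\gg 1$ of \eqref{eq:endpoints} may fail; its relative size is governed by $\min_n\Phi(R_n)$, not by $\ell_R$ alone. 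Since the commutator trace is fixed, Proposition~\ref{prop:commutator} gives $y_R^2-1=(2-K)/4\sinh^2\ell_R\to 0$, so $y_R\to 1$ as $\ell_R\to\infty$ and nothing prevents a single $\Phi(R_n)$ from staying close to $2$ unless one again invokes the uniform Bowditch growth; without this, $\sum_n r_n^\alpha$ could retain one term close to $1$ and the infinite product would not vanish. With these two inputs made explicit, and with the projective-distortion bookkeeping you correctly identify as the remaining difficulty (e.g.\ by bounded-distortion estimates for the convex curve $\partial\Pi$, or by the symbolic ``pressure at zero'' formulation you propose), I believe the argument closes; as written it is a sound outline rather than a complete proof, which is an honest match for the hedged status of the statement in the paper.
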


\subsection{Slopes}
We can also compute the slope of a side $P_n^- P_n^+$:
$$
\sigma_n:=\frac{Y_n^+-Y_n^-}{X_n^+-X_n^-} = y \tanh \xi_n = y(1- 2 \Xi_n^{-2})+O(\Xi_n^{-4})
$$
and the slope of a chord $P_n^+ P_{n+1}^-$:
$$
\sigma'_n:=\frac{Y_{n+1}^--Y_n^+}{X_{n+1}^--X_n^+} = y\left (1-(1+e^{-2\ell}) \Xi_n^{-2}\right )+O(\Xi_n^{-4})~.
$$
As expected by convexity of $\Pi$, the sequence $$\sigma_n~,~ \sigma'_n~,~ \sigma_{n+1}~,~ \sigma'_{n+1}~,~ \sigma_{n+2}~,~\sigma'_{n+2}~,~ \dots$$ is monotonically increasing for large $n$ since $\Xi_{n+1}=e^{\ell}\Xi_n$ and $2>1+e^{-2\ell}>2e^{-2\ell}$.

\subsection{Asymptotes}
Notice finally that the perimeter of $\Pi$ has no asymptote (although it has an asymptotic direction) as $n$ goes to $+ \infty$ or $-\infty$, which means the endpoints of the projective side at infinity $[P^-P^+]$ are smooth. To see this, it is enough to compute from (\ref{eq:endpoints}) that the line extending the side $[P_n^- P_n^+]$ intersects the horizontal axis at the coordinate $$\frac{Y_n^+ X_n^- - Y_n^- X_n^+}{Y_n^+ - Y_n^-}=n$$ which diverges (pleonastically!) for $n \rightarrow \pm \infty$.

\section{Limiting case: pinching one curve} \label{sec:onecurve}
We now consider the limit case of \eqref{eqii} where $\Phi\geq 2$ is a Markoff map with just one entry equal to $2$, and seek infinitesimal deformations of $\Phi$ such that all entries increase. In \eqref{eq:zo} this can be seen as the limit case $\ell=0$.

Since the derivative of $\cosh$ at $0$ is $0$, the above parameterization of the tangent space to Markoff maps by $L,Y,X$ is no longer valid. Instead, we have $\Phi(R)=2\cosh \ell = 2$, hence from \eqref{eq:markoff} the relationship $\Phi(R_{n+1})+\Phi(R_{n-1})=2\Phi(R_n)$ which yields $$\Phi(R_n)=zn+2y$$ (analogue of \eqref{eq:zo}) for certain real constants $z,y$. Necessarily, $z=0$ and $y\geq 1$ because $\Phi\geq 2$. In this section we focus on $$y>1.$$

By \eqref{traceid}, if $\Phi(R)=2$, then the trace $K$ of the commutator is $2+(\Phi(R_{n+1})-\Phi(R_{n}))^2=2$: in particular, we are automatically also in the degenerate case of \eqref{eqii} where $K=2$ (cf Remark \ref{rem:linked}). This means the singular torus $S$ has collapsed to a circle, of length $2\, \mathrm{arccosh}(y)>0$ (the common length of all the loops that intersect the collapsing loop exactly once).

Keeping $\Phi(R)=2$ constant, the only length-expanding deformation is therefore given by stretching the collapsed circle, i.e.\ (up to scaling) $\text{d}y/\text{d}t=1$. This means the $R$-edge of the polygon $\Pi$ has collapsed to a single point $r$. Let us now study how the $R_n$-edges of $\Pi$ accumulate to $r$.

\begin{theorem} \label{thm:onepinch}
All the $R_n$-edges (for $n\in\mathbb{Z}$) have their endpoints on a conic through $r\in\mathbb{P}^2\mathbb{R}$. Moreover, all these edges are tangent to a second conic, which intersects the first one at $r$ with a contact of order 4. 
\end{theorem}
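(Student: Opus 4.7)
The plan is to parameterize the tangent space at the degenerate point $\ell=0$ and then redo, in those new coordinates, the endpoint calculation of \S\ref{sec:neighboring}. At the degenerate point $\Phi(R)=2$ and $\Phi(R_m)=2y$ for all $m$, with $y>1$ by assumption. Differentiating the Markoff recurrence $\Phi(R_{m+1})+\Phi(R_{m-1})=\Phi(R)\Phi(R_m)$ at this point gives the constant-coefficient linear recurrence
$$\Phi'(R_{m+1})-2\Phi'(R_m)+\Phi'(R_{m-1})=2y\epsilon,\qquad \epsilon:=\Phi'(R),$$
whose general solution $\Phi'(R_m)=\alpha+\beta m+y\epsilon m^2$ provides coordinates $(\epsilon,\alpha,\beta)$ on the 3-dimensional tangent space, replacing the chart $(L,Y,X)$ of \S\ref{sec:computation} which degenerates since $\sinh\ell=0$.

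Next I would imitate \S\ref{sec:neighboring}. Imposing $\Phi'(R_n)=0$ on $E_{R_n}$ forces $\alpha=-\beta n-y\epsilon n^2$, and substitution gives $\Phi'(R_{n+1})=\beta+y\epsilon(2n+1)$ and $\Phi'(R_{n-1})=-[\beta+y\epsilon(2n-1)]$. The two endpoints of $E_{R_n}$ are again singled out by the geometric-progression condition $\Phi'(R)^2=\Phi'(R_{n+1})\Phi'(R_{n-1})$, which becomes a quadratic in $\beta$ of discriminant $4\epsilon^2(y^2-1)$, positive by Proposition~\ref{prop:commutator}, with roots
$$\beta=\epsilon\bigl(-2yn\pm\sqrt{y^2-1}\bigr).$$

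Next I would identify the two conics. Eliminating $n$ from the endpoint coordinates $(\alpha,\beta)=(yn^2\mp n\sqrt{y^2-1},\,-2yn\pm\sqrt{y^2-1})$ in the affine chart $\epsilon=1$ yields $\beta^2-4y\alpha=y^2-1$, so all such endpoints lie on
$$\mathcal{C}_1\colon\ \beta^2=4y\alpha\epsilon+(y^2-1)\epsilon^2,$$
which obviously passes through $r=[0:1:0]$, the direction of the unique length-expanding deformation keeping $\Phi(R)=2$. The edge $E_{R_n}$ itself is carried by the projective line $\alpha+n\beta+yn^2\epsilon=0$, and the classical envelope computation ($\partial/\partial n=\beta+2yn\epsilon=0$ and substitution $n=-\beta/(2y\epsilon)$) produces the tangent conic
$$\mathcal{C}_2\colon\ \beta^2=4y\alpha\epsilon.$$
Subtracting the two equations gives $(y^2-1)\epsilon^2=0$; since $y>1$, this forces $\epsilon=0$ and then $\mathcal{C}_2$ forces $\beta=0$, so $\mathcal{C}_1\cap\mathcal{C}_2=\{r\}$ set-theoretically. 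By B\'ezout in $\mathbb{P}^2\mathbb{R}$, the intersection multiplicity at $r$ must therefore equal $4$, the maximum possible between two conics.

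The main obstacle will be justifying that the Section~\ref{sec:neighboring} geometric-progression criterion still singles out the correct endpoints of $E_{R_n}$ in this degenerate regime, where those endpoints are no longer proper vertices of $\Pi$ but are accumulation points of the infinitely many little edges $E_{R_{(qn\pm 1)/q}}$ as $q\to\infty$, in the spirit of Remark~\ref{interstices}. I would settle this either by continuity, taking the $\ell\to 0$ limit of the non-degenerate formulas \eqref{eq:endpoints} after rescaling into the chart $(\epsilon,\alpha,\beta)$, or by a direct computation along $E_{R_n}$ of the identity $\Phi'(R_{(qn\pm 1)/q})=U_{q-1}\Phi'(R_{n\pm 1})-U_{q-2}\epsilon$ (with $U_k$ the Chebyshev polynomial of the second kind evaluated at $y$), and then checking that the ratios $U_{q-2}/U_{q-1}$ tighten monotonically to $e^{-\mu}=y-\sqrt{y^2-1}$, producing exactly the bounds $\beta=\epsilon(-2yn\mp\sqrt{y^2-1})$ in the limit.
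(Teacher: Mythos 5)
Your proposal is correct and follows essentially the same route as the paper: differentiate the Markoff recurrence at the degenerate map to get $\Phi'(R_m)$ quadratic in $m$, impose $\Phi'(R_n)=0$ together with the geometric-progression condition $\Phi'(R_{n+1})\Phi'(R_{n-1})=\Phi'(R)^2$ to locate the endpoints, and recognize the two resulting conics (the paper normalizes $\Phi'(R)=1$ and exhibits them as a parabola $Y=X^2/4y$ and its translate along the axis, while you keep homogeneous coordinates and invoke B\'ezout for the order-$4$ contact, but this is the same computation). The worry you raise at the end about Remark~\ref{rem:geoseq} in the degenerate regime is one the paper also passes over silently, and your suggested continuity argument would resolve it.
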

\begin{proof}
Differentiating \eqref{eq:markoff}, all deformations $\Phi'$ normalized to $\Phi'(R)=1$ satisfy 
$$ \Phi'(R_{n+1})+\Phi'(R_{n-1})=2y+2\Phi'(R_n)$$ 
hence $$\Phi'(R_n)=yn^2-Xn+Y$$ for certain constants $X,Y$ depending only on $\Phi'$ (this is the analogue\footnote{Although the roles of variables $y, Y$ are analogous to \S 3, we cannot here interpret $Y$ as a ``derivative''~$y'$: in fact the notation $y'$ does not a priori make sense when $\Phi'(R)\neq 0$.} of \eqref{eq:zozo}). As in Section \ref{sec:neighboring} above, by Remark \ref{rem:geoseq} the endpoints of the $R_n$-edge are the pairs $(X,Y)=(X_n^\pm, Y_n^\pm)$ such that 
\begin{eqnarray*}\Phi'(R_n)&=&0 ~\text{ and } \\ \Phi'(R_{n+1})\Phi'(R_{n-1})&=&\Phi'^2(R)=1\end{eqnarray*}
which can be written
\begin{eqnarray*}
 0&=& yn^2-Xn+Y \\
1 &=& [y(n+1)^2-X(n+1)+Y] \cdot [y(n-1)^2-X(n-1)+Y] \\
  &=& (2ny+y-X)(-2ny+y+X) = y^2 - (X-2ny)^2~.
\end{eqnarray*}
This admits the solutions 
$$
\begin{array}{rcl}
X=X_n^{\pm}&=&2ny\pm\sqrt{y^2-1} \\
Y=Y_n^{\pm}=X_n^{\pm} n-yn^2 &=& yn^2\pm n\sqrt{y^2-1}~.
\end{array}
$$
If we define $f(X)=\frac{X^2}{4y}$ and the arithmetic progression $X_n=2y\cdot n$, we therefore have $f(X_n)=n^2y$ and $f'(X_n)=n$, hence 
$$\begin{array}{rcr@{X_n}l@{~\pm~}l}
X_n^{\pm}&=&&&\sqrt{y^2-1} \\
Y_n^{\pm}&=&f(&)& \sqrt{y^2-1} \cdot f'(X_n)~.
\end{array}
$$
Therefore the edges $E_{R_n}$ of the polygon $\Pi$, for $n\in\mathbb{Z}$,  are all tangent to a common parabola (the graph of $f$). The endpoints of the edges $E_{R_n}$ lie on a second parabola, a translate of the first one along its asymptotic axis, downwards by $f(\sqrt{y^2-1})=\frac{y-y^{-1}}{4}$ units of length. Algebraically, the contact between these two parabolas at infinity is of order $4$. This proves Theorem \ref{thm:onepinch}. \end{proof}

The expression of $X_n^\pm$ and $Y_n^\pm$ in the proof above shows that the union of the edges $E_{R_n}$ occupies an asymptotic proportion $\sqrt{y^2-1}/y<1$ of the perimeter of $\Pi$ near the point $r$ (as $n$ goes to $\pm \infty$). This is in contrast with the behavior for geometric $\Phi$ (Remark \ref{interstices}). Qualitatively, $\Pi$ looks similar to
Figure \ref{fig:parabole}, except that the two parabolic branches have the same asymptotic (vertical) direction, so that $P^+=P^-$ in $\mathbb{P}^2\mathbb{R}$.

\section{Limiting case: pinching all curves} \label{sec:allcurves}

Suppose finally that $\Phi(R)=2$ for all regions $R$ of the Markoff map --- or equivalently, that $y=1$ in the above. Then the corresponding representation is (unipotent or) trivial. An infinitesimal, length-increasing deformation will correspond to a singular hyperbolic torus of infinitesimally small diameter, i.e.\ a \emph{Euclidean} torus. The space of Euclidean tori (of normalized area, say) is canonically identified with $\mathbb{H}^2$, a round projective disk; and we will show that indeed,
\begin{theorem} \label{thm:allpinch}
When $\Phi\equiv 2$, the polygon $\Pi$ degenerates to a round projective disk which is also the tangent cone, at $(2,2,2)$, of the closure of the deformation space $\mathcal{X}$ of Section~\ref{curlix}.
\end{theorem}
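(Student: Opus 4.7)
The plan is to linearize the Markoff relation at the constant map $\Phi\equiv 2$ and show that admissible first-order deformations are parametrized exactly by quadratic forms on $\mathbb{Z}^2$, with lengthening deformations corresponding to positive semidefinite forms.

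First, I would differentiate the Markoff identity $\Phi(R)+\Phi(R''')=\Phi(R')\Phi(R'')$ at $\Phi\equiv 2$, obtaining
\[ \Phi'(R)+\Phi'(R''') \;=\; 2\bigl(\Phi'(R')+\Phi'(R'')\bigr). \]
Writing $R=R_{(p+p')/(q+q')}$, $R'=R_{p/q}$, $R''=R_{p'/q'}$, $R'''=R_{(p-p')/(q-q')}$ with $(p,q),(p',q')$ Farey neighbors, this becomes the parallelogram identity
\[ \Phi'\bigl(R_{(p+p')/(q+q')}\bigr) + \Phi'\bigl(R_{(p-p')/(q-q')}\bigr) \;=\; 2\Phi'(R_{p/q}) + 2\Phi'(R_{p'/q'}). \]
Any function on pairs of coprime integers satisfying this relation (and the parity condition $\Phi'(R_{-p/-q})=\Phi'(R_{p/q})$ that is built into our indexing) extends to a symmetric bilinear form: the map $Q(p,q):=\Phi'(R_{p/q})$ satisfies the parallelogram law on $\mathbb{Z}^2$ and therefore comes from a (unique) quadratic form on $\mathbb{R}^2$. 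This identifies the $3$-dimensional tangent space of Markoff-derivatives with the space $\mathrm{Sym}^2(\mathbb{R}^2)^\ast$ of quadratic forms.

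Next, the lengthening condition $\Phi'(R_{p/q})\geq 0$ for all coprime $(p,q)$ translates into the requirement that $Q$ be positive semidefinite on $\mathbb{Z}^2$, hence by density on $\mathbb{R}^2$. The open cone of positive definite quadratic forms on $\mathbb{R}^2$ projectivizes to the interior of the projective conic $\{\det Q=0\}\subset \mathbb{P}(\mathrm{Sym}^2(\mathbb{R}^2)^\ast)\simeq\mathbb{P}^2\mathbb{R}$, i.e.\ to a round projective disk. This disk carries its canonical Klein model identification with $\mathbb{H}^2$, and via the usual correspondence between positive-definite quadratic forms on $\mathbb{Z}^2$ (modulo positive scaling) and marked flat tori of fixed area, the disk is canonically the Teichm\"uller space of marked Euclidean tori. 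The boundary conic corresponds to rank-one forms, i.e.\ to Euclidean tori degenerating to a circle, matching the pinched cases of Section~\ref{sec:onecurve}.

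Finally, for the tangent cone statement, I would verify that any smooth arc $t\mapsto \Phi_t\in\mathcal{X}$ starting at $\Phi_0\equiv 2$ has $\Phi_t(R_{p/q})-2\geq 0$, so to first order the derivative $Q=\Phi'_0$ is a positive semidefinite quadratic form; conversely, given any positive semidefinite $Q$, the trace identity together with the asymptotic expansion $2\cosh(\ell/2)=2+\ell^2/4+O(\ell^4)$ shows that rescaling a fixed Euclidean metric with length form $Q^{1/2}$ by a vanishing parameter produces a family in $\mathcal{X}$ with tangent $Q$ at $(2,2,2)$. Thus the tangent cone of $\overline{\mathcal{X}}$ at $(2,2,2)$ coincides with the cone of positive semidefinite forms, and its projectivization is the same round disk as $\Pi$. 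The main subtlety to watch will be the second point: ensuring that the linear identification truly comes from actual geometric deformations and not merely from abstract solutions of the linearized Markoff equation --- in other words, that every positive semidefinite $Q$ is realized as the initial tangent of a hyperbolic family, which is what the $\ell^2/4$-expansion of the trace provides.
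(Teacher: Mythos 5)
Your proof is correct and follows essentially the same route as the paper: the paper encodes the linearized Markoff relation as $\Phi'(R_{p/q})=\eta(v_{p/q})$ for a linear form $\eta$ on $\mathbb{R}^{2,1}$ evaluated on lightlike vectors $v_{p/q}$, which is exactly your binary quadratic form $Q(p,q)$ under the Veronese identification of $\mathbb{R}^{2,1}$ with the space of such forms, and the positivity/round-disk conclusion (by density of the rational directions) and the identification with Euclidean tori are drawn in the same way. The only point to tighten is that the parallelogram identity is available only for unimodular pairs $(p,q),(p',q')$, not for all of $\mathbb{Z}^2$, so the extension of $Q$ to a genuine quadratic form requires the (immediate) induction over the Farey tree starting from the values at $(1,0)$, $(0,1)$, $(1,1)$ --- the same induction the paper invokes.
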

\begin{proof}
By \eqref{eq:markoff}, for every $4$-tuple of distinct regions $R,R',R'',R'''$ such that all pairs except $\{R,R'''\}$ are adjacent, a deformation of the constant Markoff map $\Phi\equiv 2$ must satisfy 
\begin{equation} \label{round}
 \Phi'(R)+\Phi'(R''')=2(\Phi'(R')+\Phi'(R''))~.
\end{equation}

Consider the hyperbolic plane $\mathbb{H}^2$ bounded by $\mathbb{P}^1\mathbb{R}$ and endowed with the Farey triangulation and a $\text{PSL}_2(\mathbb{Z})$-invariant family of horoballs $(H_q)$ centered at the rationals $q\in\mathbb{Q}\cup\{\infty\}$. Above every $H_q$ lives a lightlike vector $v_q$ in Minkowski space $\mathbb{R}^{2,1}$ such that $H_q$ is the intersection of the semi-hyperboloid $\mathbb{H}^2$ with the affine half-space $\{v\in\mathbb{R}^{2,1}~|~\langle v |v_q\rangle \leq 1\}$, where $\langle (x,y,x)|(x',y',z')\rangle = xx'-yy'-zz'$ is the Minkowski product.

The pairing $\langle v_q | v_{q'}\rangle$ depends only on the signed hyperbolic distance $d$ between $H_q$ and $H_{q'}$ (it is $2e^d$ though the actual function does not matter) and is therefore the same for any pair of Farey neighbors $q,q'$, by $\text{PSL}_2(\mathbb{Z})$-invariance. Hence, if $q,q',q'',q'''$ are distinct rationals in $\mathbb{P}^1\mathbb{Q}$ such that every pair except $(q,q''')$ consists of Farey neighbors, it is easy to see that up to action by an element of $\text{O}_{2,1}(\mathbb{R})$,
$$\begin{array}{rc@{~(}r@,r@,r@)}
v_q&=&2 &2 &0  \\
v_{q'''}&=&2 &-2& 0\\
v_{q'}&=&1& 0& 1\\
v_{q''}&=&1 &0 &-1
\end{array}$$
hence
$$ v_q+v_{q'''}=2(v_{q'}+v_{q''})~.$$
This is formally the same identity as (\ref{round}).
Therefore, and by an immediate induction on the Farey graph, one has $\Phi'(R_q)=\eta(v_q)$ for an appropriate linear form $\eta$ depending only on the infinitesimal deformation $\Phi'$ (not on $q$). Dually, we can view $\eta$ itself as a vector of $\mathbb{R}^{2,1}$. But the $[v_q]$ are dense in the projectivized isotropic cone, because $\mathbb{Q}$ is dense in $\mathbb{R}$. The condition ``$\Phi'(R_q)>0$ for all $q$'' on $\eta$ therefore defines a round cone in $\mathbb{R}^{2,1}$ (minus the rational rays on its boundary), as claimed. Theorem \ref{thm:allpinch} is proved, as Remark \ref{rem:linked} also shows the lengthening condition defines a round cone of directions from the tip $(2,2,2)$ of $\mathcal{X}$. \end{proof}

Let us finally show that the interior of this cone of deformations of $\Phi\equiv 2$ is naturally identified with the space of flat Euclidean structures on the marked torus, which is also the space of Euclidean quadrilaterals up to similarity. 

Recall that every value of a geometric Markoff map is (twice) the hyperbolic cosine of the (half) length of a geodesic in the hyperbolic punctured torus $S$. 
Since $\cosh \ell=1+\frac{1}{2}\ell^2+o(\ell^3)$ as $\ell \rightarrow 0$, for a ``lengthening'' deformation $\Phi'$ of the nongeometric Markoff map $\Phi\equiv 2$, we should interpret $\Phi'(R_q)$ as the \emph{square} of the renormalized length of a curve of slope $q\in \mathbb{P}^1\mathbb{Q}$ in an infinitesimally small hyperbolic (i.e.\ Euclidean) torus.   

More precisely, if $\gamma, \gamma', \gamma'', \gamma'''$ are geodesic loops in a flat torus such that each pair except $(\gamma, \gamma''')$ intersects exactly once, then their \emph{squared} lengths $l,l',l'',l'''$ satisfy $$l+{l'''}=2(l'+l'')$$
hence again by an immediate induction $l_q=\eta(v_q)$ for some linear form $\eta$ (determined by $(l_0, l_1, l_\infty)$ and independent of $q$), where $l_q$ is the squared length of the loop of slope $q$. The condition that $\sqrt{l_0},\sqrt{l_1},\sqrt{l_\infty}$ satisfy the strong triangle inequality (i.e.\ are realized in some Euclidean metric) can be written
$$l_0^2+l_1^2+l_\infty^2-2(l_0 l_1 + l_1 l_\infty + l_\infty l_0)<0$$
which again defines a cone on a round disk, tangent to all three faces of the projectivized positive octant. This is the same cone as the one given by the condition that $\eta$ be positive on all $v_q$. It is also (after projectivization) the round disk of Figure \ref{fig:wedge}.

Thus the algebraic action of the mapping class group $\mathrm{SL}_2\mathbb{Z}$ on (the closure of) the character manifold $\mathcal{X}$ linearizes, near the fixed point $(2,2,2)$ representing the trivial representation, to the usual Lorentzian action on $\mathbb{R}^{2,1}$ given by the irreducible 3-dimensional representation of $\mathrm{SL}_2\mathbb{R}$. It would be interesting to know how, or how far, this picture extends to surfaces of higher complexity.

\begin{flushright}
\end{flushright}


\begin{thebibliography}{MaR}


\bibitem[Bon]{bonahon} Francis Bonahon, \emph{Low-dimensional Geometry: from Euclidean Surfaces to Hyperbolic Knots}, Student Math.\ Library (Vol.\ 49), AMS 2009, 384pp. 

\bibitem[Bow]{bowditch} Brian H. Bowditch, \emph{Markoff triples and quasifuchsian groups}, Proc.\ London Math.\ Soc.\ {\bf 77} (1998), 697--736.

\bibitem[CG]{bookofnumbers} J.H.\ Conway, R.\ K.\ Guy, \emph{The Book of Numbers}, Springer Verlag, New York, 1994.


\bibitem[C]{charette} Virginie Charette, \emph{Non-proper affine actions of the holonomy group of a punctured torus}, Forum Math.\ {\bf 18} (2006), no.\ 1, 121--135.

\bibitem[CDG1]{3holed} V.\ Charette, T.\ A.\ Drumm, W.\ M.\ Goldman, \emph{Affine deformations of a three-holed sphere}, Geometry \& Topology {\bf 14} (2010), 1355--1382.

\bibitem[CDG2]{2holed} V.\ Charette, T.\ A.\ Drumm, W.\ M.\ Goldman, \emph{Finite-sided deformation spaces of complete affine 3-manifolds}, J.\ of Topology {\bf 7} (1), 2014, 225--246. 

\bibitem[CDG3]{1holed} V.\ Charette, T.\ A.\ Drumm, W.\ M.\ Goldman, \emph{Proper affine deformations of two-generator Fuchsian groups}, in preparation. 

\bibitem[DGK1]{dgk}
J.\ Danciger, F.\ Gu\'eritaud, F.\ Kassel, \textit{Geometry and topology of complete Lorentz spacetimes of constant curvature}, preprint, arXiv:1306.2240.

\bibitem[DGK2]{dgk2}
J.\ Danciger, F.\ Gu\'eritaud, F.\ Kassel, \textit{Margulis spacetimes via the arc complex}, preprint, arxiv:1407.5422.

\bibitem[Dr]{drumm} Todd A.\ Drumm, \emph{Linear holonomy of Margulis space-times}, J.\ Diff.\ Geom.\ {\bf 38} (1993), no.\ 3, 679--690.

\bibitem[Fo]{ford} Lester R.\ Ford, \emph{The fundamental region for a Fuchsian group}, Bull.\ AMS {\bf 31} (1935), 531--539. 

\bibitem[GLM]{golama} William M.\ Goldman, Fran\c{c}ois Labourie, Gregory Margulis, \emph{Proper Affine Actions and Geodesic Flows of hyperbolic surfaces}, Annals of Math.\ {\bf 170} no.\ 3 (2009), 1051--1083.

\bibitem[GLMM]{glmm} W.\ M.\ Goldman, F.\ Labourie, G.\ A.\ Margulis, Y.\ Minsky, \emph{Complete flat Lorentz $3$-manifolds and laminations on hyperbolic surfaces}, in preparation.

\bibitem[Go]{goldman} William M.\ Goldman, \emph{The modular group action on real SL(2)--characters of a one-holed torus}, Geometry \& Topology {\bf 7} (2003), 443--486.

\bibitem[HW]{hardy} G.\ H.\ Hardy, E.\ M.\ Wright, \emph{An Introduction to the Theory of Numbers}, 5th ed.\ Clarendon Press, Oxford, 1979.

\bibitem[Ma]{margulis} Gregory Margulis, \emph{Free properly discontinuous groups of affine transformations}, Dokl.\ Akad.\ Nauk.\ SSSR {\bf 272} (1983), 937--940

\bibitem[Th]{stretchmaps} William P.\ Thurston, \emph{Minimal stretch maps between hyperbolic surfaces}, 1986 preprint, arXiv:math/9801039v1.

\end{thebibliography}
\end{document}